\theoremstyle{plain}
 \newtheorem{theorem}[subsection]{Theorem}
 \newtheorem{lem}[subsection]{Lemma}
 \newtheorem{prop}[subsection]{Proposition}
 \newtheorem{rem}[subsection]{Remark}
 \newtheorem{assumption}[subsection]{Assumption}
 \theoremstyle{definition}
 \newtheorem{defi}[subsection]{Definition}
 \newtheorem{notation}[subsection]{Notation}
\newcommand{\rep}{\operatorname {rep}}
\newcommand{\CentAG}{{\Cent_A(G)}}
\newcommand{\CentAoG}{{\Cent_A(\o G)}}
\newcommand{\w}{\widetilde}
\newcommand{\wh}{\widehat}
\newcommand{\Gu}{{\w G}}
\newcommand{\bGu}{{\w \bG}}
\newcommand{\Nu}{{\w N}}
\newcommand{\Bu}{{\w B}}
\newcommand{\la}{\ensuremath{\lambda}}
\newcommand{\Irr}{\mathrm{Irr}}
\newcommand{\Irrl}{\mathrm{Irr}_{\p'}}
\renewcommand{\labelenumi}{(\alph{enumi})}
\renewcommand{\theenumi}{\thesubsection{ }(\alph{enumi})}
\newcommand{\tw}[1]{\,{}^#1\!}
\newcommand{\CC}{\ensuremath{\mathbb{C}}}
\renewcommand{\o}{\overline}
\newcommand{\bB}{{\mathbf B}}
\newcommand{\Cent}{\ensuremath{{\rm{C}}}}
\newcommand{\NNN}{\ensuremath{{\mathrm{N}}}}
\newcommand{\I}{\ensuremath{\mathrm{I}}}
\def\restr#1|#2{\left.#1\right\rceil_{#2}}
\newcommand{\FF}{\ensuremath{\mathbb{F}}}
\newcommand{\SC}{{sc}}
\newcommand{\PSU}{{\mathrm{PSU}}}
\newcommand{\GL}{{\mathrm{GL}}}
\newcommand{\tB}{\mathsf B}
\newcommand{\Cy}{\mathrm C}
\newcommand{\tC}{\mathsf C}
\newcommand{\tD}{\mathsf D}
\newcommand{\tF}{\mathsf F}
\newcommand{\tG}{\mathsf G}
\newcommand{\calD}{\mathcal D}
\newcommand{\D}{\operatorname D}
\newcommand{\calG}{\ensuremath{\mathcal G}}
\newcommand{\al}{{\alpha}}
\newcommand{\si}{{\sigma}}
\def\Set#1{\Set@h#1@}
\def\Lset#1{\Lset@h#1@}
\def\Set@h#1|#2@{\left\{\left.#1\vphantom{#2}\hskip.1em\,\right|\,\relax #2\right\}}
\def\Lset@h#1@{\left\{#1\right\}}
\newcommand{\Aut}{\mathrm{Aut}}
\newcommand{\Out}{\ensuremath{\mathrm{Out}}}
\def\spann<#1>{\left\langle#1\right\rangle}
\newcommand{\calN}{\ensuremath{\mathcal N }}
\newcommand{\Guchinull} {{\w G_{\chi_0}}}
\newcommand{\cohom}{{\bf (cohom)}}
\newcommand{\Nupsinull} {{\w N_{\psi_0}}}
\newcommand{\Z}{\operatorname Z}
\newcommand{\bG}{\mathbf G}
\newcommand{\bU}{\mathbf U}
\newcommand{\calP}{\mathcal P}
\newcommand{\calQ}{\mathcal Q}
\newcommand{\OmegaNES}{{\Omega_N^\ES}}
\newcommand{\Omegau}{{\w \Omega}}
\newcommand{\ES} {{G}}
\def\cprime{$'$} 
\def\p{p}
\title{Inductive McKay Condition in defining Characteristic}
\author{Britta Sp\"ath}
\address{Institut de Math\'ematiques de Jussieu, Universit\'e Paris VII, 175, rue de Chevaleret, 75013 Paris, France}
\email{spath@math.jussieu.fr}
\thanks{This research has been supported by the Fondation Sciences Math\'ematiques de Paris and the Deutsche Forschungsgemeinschaft, SPP 1388.}
\begin{document}

\begin{abstract}
We reformulate the inductive McKay condition, from Isaacs--Malle--Navarro \cite{IsaMaNa}, and apply the new criterion to simple groups of Lie type, when the prime is the defining characteristic $\p$. Thereby we make use of a recent result of Maslowski \cite{Maslowski}. This proves that these simple groups satisfy the inductive McKay condition for $\p$. In the non simply--laced types and non--classical types this reproves earlier results by Brunat \cite{Brunat,BrunatE6} and Brunat--Himstedt \cite{BrunatHimstedt}. 
\end{abstract}
\maketitle
\section{Introduction}
In the representation theory of finite groups the McKay conjecture asserts that for every finite group $H$ and prime $\p$ the equation \[ |\Irrl(H)|=|\Irrl(N)| \]
holds, where $N$ is the normalizer of a Sylow $\p$--subgroup in $H$ and $\Irrl(X)$ denotes the set of irreducible characters of degree prime to $\p$ for any finite group $X$. In \cite{IsaMaNa}, the authors presented a reduction theorem for it: assuming that every simple group satisfies a certain condition, the now--called {\it inductive McKay condition}, for all primes dividing their order the authors verify the McKay conjecture for every finite group. 

The inductive McKay condition has been checked for sporadic and alternating groups in \cite{ManonLie}. It remains to check the simple groups of Lie type. This paper proves the condition in the following cases.

\begin{theorem}\label{hauptthm}
Let $S$ be a simple group of Lie type of characteristic $p$. Then the inductive McKay condition holds for $S$ and $\p$.
\end{theorem}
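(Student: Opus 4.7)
The plan is to realise $S$ as a simple quotient of a finite group of Lie type and to verify the inductive McKay condition of \cite{IsaMaNa} by first reformulating it, so that it can be checked one intermediate subgroup at a time using a regular embedding, and then feeding in Maslowski's equivariant bijection \cite{Maslowski}.

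For the setup, I would write $S = G/\Z(G)$ with $G = \bG^F$, where $\bG$ is a simply connected simple algebraic group in characteristic $p$ and $F$ is a Steinberg endomorphism with finite fixed points. Fix an $F$-stable Borel $\bB = \bU \rtimes \mathbf T$; then $P := \bU^F$ is a Sylow $p$-subgroup of $G$ and $N := N_G(P) = \bB^F$. Let $\bG \hookrightarrow \bGu$ be a regular embedding into a connected reductive group with connected centre, producing $\Gu = \bGu^F$ and $\Nu = \w{\bB}^F$, which induce all diagonal automorphisms of $S$. Let $D \leq \Aut(S)$ be a complement to the image of $\Gu$, so that $D$ consists of field and graph automorphisms.

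The first step is to reformulate the inductive McKay condition into a statement purely about characters of $G$, $N$, $\Gu$, $\Nu$ together with their $D$-stabilisers. The guiding observation is that extensions to $\Gu$ and to $\Nu$ are automatic because $\Gu/G$ and $\Nu/N$ are abelian $p'$-groups, so one only needs to lift further to $\Gu \rtimes D_\chi$ and $\Nu \rtimes D_\psi$ and match the resulting two-cocycles. The second step is to invoke Maslowski's theorem to obtain a $(\Gu \rtimes D)_P$-equivariant bijection $\Omega\colon \Irrp(G) \to \Irrp(N)$. The third step is to check, for every $\chi \in \Irrp(G)$ with image $\psi = \Omega(\chi)$, the cohomological compatibility \cohom\ between the two-cocycles coming from extensions of $\chi$ and $\psi$ to their respective stabilisers in $\Gu \rtimes D$ and $\Nu \rtimes D$.

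The main difficulty will be \cohom. One would construct explicit extensions on both sides using the available concrete descriptions: for $\Irrp(G)$ via semisimple characters in Deligne--Lusztig theory, and for $\Irrp(N) = \Irrp(\bB^F)$ via characters inflated from $\mathbf T^F$. Field automorphisms act in a controlled way on both parametrisations, so they should not cause trouble; the delicate issues arise from the interaction of diagonal, field and graph automorphisms, in particular from the triality automorphism in type $\tD_4$, which has to be treated by direct computation. Once \cohom\ is established uniformly in the type of $S$, the reformulated criterion delivers the inductive McKay condition for $S$ at $p$, completing the proof.
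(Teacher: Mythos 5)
There is a genuine gap at the heart of your plan, in Step 2. Maslowski's theorem does not give you a $(\Gu\rtimes D)_Q$--equivariant bijection $\Irrl(G)\rightarrow\Irrl(N)$: what \cite{Maslowski} provides is a $D$--equivariant bijection $\Omegau:\Irrl(\Gu)\rightarrow\Irrl(\Bu)$ at the level of the groups with connected centre, compatible with multiplication by $\Irr(\Gu/G)$ and with central characters. Descending from this to a bijection $\Irrl(G)\rightarrow\Irrl(N)$ that is equivariant under \emph{all} automorphisms stabilising $Q$ --- in particular under the diagonal automorphisms induced by $\Gu$ --- is exactly the nontrivial step, and it is the main content of the paper's Theorem \ref{hauptthm2}. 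That descent needs more than maximal extensibility: one must find, in each Clifford orbit, distinguished constituents $\chi_0\in\Irrl(G\mid\chi)$ and $\psi_0\in\Irrl(N\mid\psi)$ whose stabilisers in $\Gu\rtimes D$ split as $\Guchinull\rtimes D_{\chi_0}$ (and the local analogue) and which extend to $(G\rtimes D)_{\chi_0}$, resp.\ $(G\rtimes D)_{N,\psi_0}$. On the global side this is established via a $D$--invariant Gel\cprime fand--Graev character, Asai's result that its Alvis--Curtis dual contains exactly one constituent of each $\Gu$--orbit with multiplicity $\pm1$, and Brunat's extension results; on the local side via Maslowski's analysis of characters of $B$ lying above the linear characters $\phi_S$ of $\bU^F$. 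Your proposal, by contrast, assumes the $G$--level equivariant bijection exists and then proposes to verify \cohom{} pair by pair with explicit Deligne--Lusztig and torus-character constructions; this is precisely the route whose equivariance problems (bijections depending on choices) the paper is designed to avoid, and nothing in your sketch explains how equivariance under diagonal automorphisms would be secured.

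Two further points. First, your claim that extensions to $\Gu$ and $\Nu$ are ``automatic because $\Gu/G$ and $\Nu/N$ are abelian $p'$-groups'' is not correct: abelian (non-cyclic) quotients do not force extendibility; maximal extensibility for $G\lhd\Gu$ is a theorem (see \cite[15.11]{CabEng}) and for $B\lhd\Bu$ it is part of Maslowski's work. Second, a complete proof must dispose of the cases your reduction does not reach: the Suzuki and Ree groups, the excluded groups $\tB_n(2)$, $\tC_n(2)$, $\tG_2(2)$, $\tB_2(2^i)$, $\tG_2(3^i)$, $\tF_4(2^i)$, and the simple groups with exceptional Schur multiplier (where $\bG^F$ is not the full covering group); these are handled in the paper by citing \cite{Cabanes}, \cite{Brunat} and \cite{ManonLie}, and your proposal omits them entirely.
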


An essential part of the inductive McKay condition is that an equivariant bijection exists, which preserves certain cohomological elements. Checking the condition on these elements became quite technical in \cite{Spaeth4}. As an alternative, we introduce here with Proposition \ref{propcohom} another approach. Also, analysing the equivariance of the required bijection is rather difficult, if the bijection depends on various choices, like the one of \cite{Ma06}. Therefore we prove Theorem \ref{hauptthm2}, according to which the existence of the required bijection can be proven without explicit constructions. Likely this statement can be applied for all simple groups of Lie type and the relevant primes.

As a first step we use it for the cases, where the prime is the defining characteristic of the simple group of Lie type. Hereby the ingredients are some basic observations on semisimple characters from Brunat's work and a bijection from \cite{Maslowski}. In the whole we obtain a quite uniform proof for groups, which are not of Suzuki or Ree type, and have no Suzuki or Ree automorphism.

{\bf Acknowledgement.} I am thankful to Johannes Maslowski for providing me with an early version of his thesis and to Olivier Brunat, Michel Enguehard and Gunter Malle for their comments on an earlier version of this manuscript. The author also thanks Marc Cabanes for various discussions and his support.

\section{Criterion for the inductive McKay condition}
In \S 10 of \cite{IsaMaNa} the authors introduced the inductive McKay condition for a simple group and a prime. The verification of the condition on cohomological elements appears especially technical and sophisticated. The aim of this section is to give an alternative approach to the verification of these conditions, which is better adapted to simple groups of Lie type. For this we use the following notation. 

\begin{notation}
Let $Z\lhd G$ be a pair of finite groups. For $\nu \in \Irr(Z)$ the irreducible constituents of the induced character $\nu^G$ form the set $\Irr(G\mid \nu)$, and analogously for $\chi\in \Irr(G)$ the constituents of the restricted character $\restr\chi|Z$ are denoted by $\Irr(Z\mid \chi)$. 
An automorphism $\si$ of $G$ acts by composition on $\Irr(G)$. Let $O$ be a group acting on $G$. Then by $O_U$ we denote the stabiliser in $O$ of a subgroup $U\leq G$, analogously the inertia group $\I_O(\chi)=O_\chi$ ($\chi\in \Irr(G)$) is the stabilizer of $\chi$ in $O$. The stabilizer of the pair $(U,\chi)$ ($\chi\in \Irr(U)\cup \Irr(G)$) is written as $O_{U,\chi}$ and $\Irr_O(G)$ is the set of characters fixed by $O$. 

Recall that for a given prime $\p$ we denote by $\Irrl(G)$ the set of irreducible characters whose degree is not divisible by $\p$. 

Let $A$ be a group such that $G\lhd A$. Let $\chi\in \Irr(G)$ and $\epsilon \in \Irr(\CentAG)$ such that $\epsilon(1)=1$ and $\Irr(\Z(G)\mid \chi)= \Irr(\Z(G)\mid \epsilon)$. Then there exists a unique extension, denoted by  $\chi\cdot \epsilon$, of $\chi$ to $G\CentAG$ with $\Irr(\CentAG\mid \chi\cdot\epsilon)=\Lset{\epsilon}$. 

In case of $A_\chi=A$ there exists, by \cite[11.2]{Isa} a projective representation $\calP:A\rightarrow \GL_{\chi(1)}(\CC)$ of $A$ such that $\restr\calP|G$ is a linear representation affording $\chi$ and its factor set $\al:A\times A\rightarrow\CC$ defined by $\calP(x) \calP(y) =\al(x,y)\calP(xy)$ ($x,y\in A$) is constant on $G$--cosets. One associates to $\al$ a cohomological element in the Schur multiplier of $A/G$, which will be denoted by $[\chi]$, as it is uniquely defined by $\al$, see Chapter 11 of \cite{Isa}. 
By \cite[11.15]{Isa} the projective representation $\calP$ can be chosen such that $\al$ and hence all $\al(x,y)$ ($x,y \in A$) have finite order. In the following we call such a factor set {\it finite}.
\end{notation}

The inductive McKay condition on a simple group $S$ will be checked using the following.
\begin{lem}{{\cite[Remark 5.2]{Spaeth4}}}\label{lem21}
Let $\p$ be a prime and $S$ a simple non--abelian group with $\p\mid |S|$. Let $\ES$ be the maximal perfect central extension of $S$, and $Q$ a Sylow $\p$--subgroup of $\ES$. By $\Aut(G)$ we denote the group of automorphisms of $G$. Assume the following 
\renewcommand{\labelenumi}{(\roman{enumi})}\renewcommand{\theenumi}{\thesubsection{ }(\roman{enumi})}
\begin{enumerate}
	\item \label{lem21_prop_N}
	 there exists an $\Aut(G)_Q$-stable subgroup $N<\ES$ which contains the normalizer of $Q$ in $\ES$, i.e. $\NNN_\ES(Q)\leq N\not= \ES$,
	\item \label{lem21b}
	there exists an $\Aut(G)_Q$--equivariant bijection 
	\[\OmegaNES:\Irrl(\ES)\longrightarrow \Irrl(N),\] with $\OmegaNES(\Irrl(\ES\mid \nu))=\Irrl(N\mid \nu)$ for any character $\nu\in \Irr(\Z(\ES))$, and
	\item \label{lem21cohom}\cohom{} holds for every $(\chi,\OmegaNES(\chi))$ ($\chi\in \Irrl(\ES)$).
\end{enumerate}
\renewcommand{\labelenumi}{(\alph{enumi})}\renewcommand{\theenumi}{\thesubsection{ }(\alph{enumi})}
Then $S$ satisfies the inductive McKay condition for $\p$. 
\end{lem}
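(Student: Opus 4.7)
The plan is to unpack the inductive McKay condition of \cite[§10]{IsaMaNa} and verify that each piece of data it requires is supplied by the hypotheses (i)--(iii). For the simple group $S$ and the prime $\p$, that condition asks for a Sylow $\p$-subgroup $Q$ of the universal covering group $\ES$, a proper overgroup of $\NNN_\ES(Q)$ in $\ES$ stable under the stabiliser $\Aut(\ES)_Q$, an $\Aut(\ES)_Q$-equivariant bijection between the $\p'$-characters of $\ES$ and those of the overgroup which is compatible with central characters, and a compatibility statement for the associated projective representations that encodes a second-cohomology equality.

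First I would take the subgroup $N$ from (i) as the required overgroup of $\NNN_\ES(Q)$; by hypothesis it is $\Aut(\ES)_Q$-stable and properly contained in $\ES$. Next I would use $\OmegaNES$ from (ii) as the equivariant bijection: its $\Aut(\ES)_Q$-equivariance is assumed, and the relation $\OmegaNES(\Irrl(\ES\mid \nu))=\Irrl(N\mid \nu)$ for every $\nu\in\Irr(\Z(\ES))$ ensures that $\chi$ and $\OmegaNES(\chi)$ lie over the same character of $\Z(\ES)$, which matches the central-character condition of \cite{IsaMaNa}.

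The only substantive step is the cohomological one. The inductive McKay condition demands, for every $\chi\in\Irrl(\ES)$, that projective representations of the corresponding extensions of $\ES$ and of $N$ by $\Aut(\ES)_{Q,\chi}$ which afford $\chi$ and $\OmegaNES(\chi)$ respectively can be chosen with factor sets defining the same cohomology class. This is exactly the content of the property \cohom{}, which hypothesis (iii) asserts for all pairs $(\chi,\OmegaNES(\chi))$. The main obstacle is therefore to verify that \cohom{}, as reformulated in the present paper via Proposition \ref{propcohom}, genuinely captures the projective-representation condition of \cite{IsaMaNa}; once this translation is in hand --- the content imported from \cite[Remark 5.2]{Spaeth4} --- the lemma reduces to matching the listed hypotheses against the literal definition of the inductive McKay condition.
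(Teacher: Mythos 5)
There is nothing in the paper to compare your argument against: Lemma \ref{lem21} is imported verbatim from \cite[Remark 5.2]{Spaeth4} and the paper gives no proof of it. Your outline takes the only reasonable route --- match the data $(N,\OmegaNES)$ of hypotheses (i) and (ii) against the requirements of \cite[\S 10]{IsaMaNa} and then handle the cohomological requirement via (iii) --- but as it stands it is not a proof, because the one substantive step is exactly the step you defer. You write that the main obstacle is ``to verify that \cohom{} \dots genuinely captures the projective-representation condition of \cite{IsaMaNa}'' and then declare this to be ``the content imported from \cite[Remark 5.2]{Spaeth4}.'' Since the lemma you are asked to prove \emph{is} that remark, this is circular: the whole burden of the statement is precisely the verification that the conditions of \cohom{} imply conditions (5)--(8) of \cite[\S 10]{IsaMaNa} for the pair $(\chi,\OmegaNES(\chi))$, and no part of your text carries that out.

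Concretely, a complete argument would have to pass to the quotient $\o G=\ES/\ker(\restr\chi|{\Z(\ES)})$ and check that the equivariance and central-character compatibility of $\OmegaNES$ descend to this quotient; exhibit the group $A$ of \cohom{} (with $A_\chi=A$, abelian $C=\Cent_A(\o G)$ and $A/\o GC\cong\Out(\o G)_\chi$) as the group demanded by \cite{IsaMaNa}, noting that their condition is phrased for $\Out(\o G)_\chi$, not for $\Aut(\ES)_{Q,\chi}$ as in your sketch; verify that the stabiliser equality $\Aut(\ES)_{N,\chi}=\Aut(\ES)_{N,\psi}$ together with the Frattini argument $\NNN_A(Q)\leq N$ produces the natural isomorphism $A/\o GC\cong\NNN_A(\o N)/\o NC$ under which the classes $[\chi\cdot\epsilon]$ and $[\psi\cdot\epsilon]$ are to be compared; and show that equality of these Schur-multiplier elements yields the existence of projective representations (equivalently, of suitably related character triples) with a common factor set, which is the form the condition takes in \cite{IsaMaNa}. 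These translations are routine but not automatic, and they are the entire content of the lemma; without them your proposal reduces to restating the hypotheses and citing the result being proved.
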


Herein one uses the following definition.

\begin{defi}[\cohom]
Let $N< G$ such that \ref{lem21_prop_N} holds. A pair of characters $(\chi,\psi)$ ($\chi\in \Irrl(G)$, $\psi\in \Irrl(N)$) is said to satisfy {\bf \cohom{}} if 
\renewcommand{\labelenumi}{(\roman{enumi})}\renewcommand{\theenumi}{\thesubsection{ }(\roman{enumi})}
\begin{enumerate}
\item $\Irr(\Z(G)\mid\chi)= \Irr(\Z(G)\mid\psi) $,
\item $\Aut(G)_{N,\chi}=\Aut(G)_{N,\psi}$,
\item for $\o G=\ES/\ker(\restr\chi|{\Z(G)})$ and $\o N= N /\ker(\restr\chi|{\Z(G)})$ there exists a group $A$ with $\o G\lhd A$, $A_\chi=A$, abelian $C=\Cent_A(\o G)$ and $A/(\o G C)= \Out(\o G)_\chi$, and further for some $\epsilon\in \Irr_A(C)$ with $\Irr(\Z(\o G)\mid \epsilon)=\Irr(\Z(\o G)\mid \chi)$ the associated elements $[\chi\cdot \epsilon ]$ and $[\psi\cdot \epsilon]$ in the Schur multipliers of $A/{\o G C}$ and $\NNN_A(\o N)/{\o NC}$ respectively, coincide under the natural isomorphism $A/{\o G C}\cong\NNN_A(\o N)/{\o N C}$. (By a Frattini argument $ \NNN_A(Q)\leq N$ implies $A/{\o G C}\cong\NNN_A(\o N)/{\o N C}$, see 10.1 of \cite{IsaMaNa}.)
\end{enumerate}
\renewcommand{\labelenumi}{(\roman{enumi})}\renewcommand{\theenumi}{\thesubsection{ }(\roman{enumi})}
\end{defi}

Particularly in the case where the outer automorphism group $\Out(S)$ is non--cyclic, the definition of the group $A$ for a pair of characters $(\chi,\psi)$ can become difficult, see for example \cite{Spaeth4}. In order to reduce the associated technical problems, we prove in Lemma \ref{lem_cohom_conj} that it suffices to check \cohom{} for certain pairs. Thereby we use projective representations of $\Aut(S)$.
\begin{lem}
There exists an isomorphism $\Aut(\ES) \rightarrow \Aut(S)$, we may embed $\Aut(\ES/K)$ in $\Aut(S)$  for $K\leq \Z(\ES)$ and write $\Aut(S)_\chi:= \Aut(\ES/\ker(\restr \chi|{\Z(\ES)}))_\chi$ for $\chi\in \Irr(\ES)$.
\end{lem}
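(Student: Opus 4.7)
The plan is to define, for every $K\leq \Z(\ES)$, a restriction map $\rho_K\colon \Aut(\ES/K)\to\Aut(S)$ by passing to the quotient modulo the center, to show it is always injective, and to show it is surjective (hence an isomorphism) in the special case $K=\{1\}$. Since $S$ is simple non--abelian, its center is trivial, so $\Z(\ES/K)=\Z(\ES)/K$; in particular $\Z(\ES/K)$ is a characteristic subgroup of $\ES/K$ with quotient $(\ES/K)/\Z(\ES/K)\cong S$. Hence every automorphism of $\ES/K$ induces an automorphism of $S$, defining $\rho_K$.

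For injectivity of $\rho_K$ I would exploit that $\ES/K$ is perfect, being a quotient of the perfect group $\ES$. If $\sigma\in\ker \rho_K$, then $\sigma(x)x^{-1}\in\Z(\ES/K)$ for every $x\in \ES/K$, so $\sigma$ fixes every commutator $[x,y]$; since $\ES/K$ coincides with its derived subgroup, $\sigma=\mathrm{id}$. For surjectivity of $\rho_{\{1\}}$ I would invoke the universal property of $\ES$ as the maximal perfect central extension (Schur cover) of $S$: given $\sigma\in\Aut(S)$, the composition $\sigma\circ\pi\colon \ES\to S$, with $\pi\colon\ES\to S$ the natural projection, is again a perfect central extension of $S$, and by universality factors uniquely through a homomorphism $\tilde\sigma\colon\ES\to\ES$. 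Applying the same construction to $\sigma^{-1}$ and using the uniqueness of the lift of $\mathrm{id}_S$ gives $\tilde\sigma\in\Aut(\ES)$ with $\rho_{\{1\}}(\tilde\sigma)=\sigma$.

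Finally, for $\chi\in\Irr(\ES)$ and $K:=\ker(\restr\chi|{\Z(\ES)})$, the center $\Z(\ES)$ acts by scalars on any representation affording $\chi$, so $K=\ker\chi\cap\Z(\ES)\leq\ker\chi$; thus $\chi$ descends to an irreducible character of $\ES/K$. The stabiliser $\Aut(\ES/K)_\chi$ is therefore well--defined, and its image under $\rho_K$ is the announced subgroup $\Aut(S)_\chi$ of $\Aut(S)$. The only delicate point is the surjectivity of $\rho_{\{1\}}$, which is classical via the Schur multiplier of a finite perfect group; note that $\rho_K$ need not be surjective for arbitrary $K$, since lifting $\sigma\in\Aut(S)$ to $\Aut(\ES)$ may fail to preserve $K$, but this is consistent with the statement, which asserts only an embedding for general $K$.
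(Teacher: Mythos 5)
Your proposal is correct and follows essentially the same route as the paper: injectivity of the natural map $\Aut(\ES/K)\to\Aut(S)$ is deduced from perfectness of $\ES/K$ (the paper's ``because of $\ES=[\ES,\ES]$''), and the descent of $\chi$ to $\ES/K$ via $K=\Z(\ES)\cap\ker\chi$ is the same observation. The only difference is that where the paper simply cites Exercise 11.6 of Aschbacher for $\Aut(\ES)\cong\Aut(S)$, you supply the classical argument via the universal property of the maximal perfect central extension, which is exactly the content of that citation.
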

\begin{proof}
By Exercise 11.6 of \cite{Aschbacher} one has $\Aut(S)\cong\Aut(\ES)$. Further for $K\leq \Z(\ES)$ the natural map $\Aut(\ES/K) \rightarrow \Aut(S)$ is injective because of $\ES=[\ES,\ES]$.
\end{proof}
From now on assume that $N< G$ is a group as in Lemma \ref{lem21_prop_N}. 

\begin{subsection}{\texorpdfstring{Projective representations associated to $(\chi,\psi)$}{Associated projective representations}}\label{subsec_25}
Let $(\chi,\psi)$ ($\chi \in \Irrl(G)$, $\psi\in \Irrl(N)$) be a pair satisfying \cohom{} with the group $A$ and $\rep: \Aut(S)_\chi= A/\Cent_A(\o G)\rightarrow A$ a section of the corresponding natural epimorphism $A \rightarrow A/\Cent_A(\o G)$. Then $\rep(x)\CentAoG = x$. Let $\calP$ be a projective representation of $A$ associated to $\chi\cdot \epsilon$ with factor set $\gamma$, like in \cite[(11.2)]{Isa}. By definition the elements $\calP(a)$ ($a \in \Cent_A(\o G)$) are scalar matrices and we define a projective representation $\w \calP$ of $\Aut(S)_\chi$ by $ \w \calP(x) = \calP(\rep(x))$ for $x\in \Aut(S)_\chi$. In such a situation we say $\w \calP$ is {\it obtained from $\calP$ using $\rep$ and associated to $\chi\cdot \epsilon$}.

As \cohom{} is satisfied we can choose a projective representation $\calP'$ of $A_N=\NNN_A(N)$ associated to $\psi\cdot \epsilon$ with factor set $\gamma$. Let $\w \calP'$ be the projective representation of $\Aut(S)_{N,\psi}$ obtained from $\calP'$ using $\rep$.

\end{subsection}

\begin{lem}\label{lem_prop_cohom}\label{lemnewformcohom}
Let $\w\al$ and $\w \al'$ be the factor set of $\w\calP$ and $\w\calP'$, respectively.
Then \[\w \al'=\restr \w \al|{\Aut(S)_{N,\psi}\times \Aut(S)_{N,\psi}}.\] 
\end{lem}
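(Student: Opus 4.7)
The plan is to compute both factor sets in closed form and observe that they are literally given by the \emph{same} formula on $\Aut(S)_{N,\psi}\times\Aut(S)_{N,\psi}$. The calculation rests on two facts: by \cohom{}(iii), and the alignment procedure packaged in \S\ref{subsec_25}, the representations $\calP$ and $\calP'$ share the same factor set $\gamma$ on $A_N\times A_N$; and by construction, both $\calP$ and $\calP'$ restrict to the scalar representation $\epsilon\cdot\mathrm{Id}$ on $C=\CentAoG$, since $\calP|_{\o G C}$ and $\calP'|_{\o N C}$ are linear representations affording $\chi\cdot\epsilon$ and $\psi\cdot\epsilon$ respectively.

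First I would record the ``defect cocycle'' of $\rep$: since $\rep$ is a set-theoretic section of $A\to A/C=\Aut(S)_\chi$, for any $x,y\in\Aut(S)_\chi$ the element
\[
c(x,y):=\rep(xy)^{-1}\rep(x)\rep(y)
\]
lies in $C$. Fixing $x,y\in\Aut(S)_{N,\psi}$, the inclusion $\Aut(S)_{N,\psi}\subseteq\Aut(S)_\chi$ forced by \cohom{}(ii) ensures $\rep(x),\rep(y),\rep(xy)$ are defined, and because their images in $\Aut(\o G)$ stabilize $\o N$ they lie in $A_N=\NNN_A(N)$; together with $c(x,y)\in C\subseteq A_N$ this places every element entering the computation in the domain of $\calP'$.

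Then a straightforward expansion using $\calP(a)\calP(b)=\gamma(a,b)\calP(ab)$ and $\calP(c)=\epsilon(c)\mathrm{Id}$ for $c\in C$ yields
\[
\w\calP(x)\w\calP(y)=\gamma(\rep(x),\rep(y))\,\gamma(\rep(xy),c(x,y))^{-1}\,\epsilon(c(x,y))\,\w\calP(xy),
\]
so that
\[
\w\al(x,y)=\frac{\gamma(\rep(x),\rep(y))\,\epsilon(c(x,y))}{\gamma(\rep(xy),c(x,y))}.
\]
Since $\calP'$ has the same factor set $\gamma$ on $A_N\times A_N$ and the same scalar behaviour $\epsilon(c)\mathrm{Id}$ on $C$, the identical computation produces the identical formula for $\w\al'(x,y)$, and the lemma follows. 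The main non-formal ingredient is the alignment step already carried out in \S\ref{subsec_25}: promoting the equality of cohomology classes from \cohom{}(iii) to a genuine equality of factor sets, which requires adjusting $\calP'$ by a scalar function $\mu\colon A_N\to\CC^\times$ trivial on $\o N C$. Once that adjustment has been made, the cocycle manipulation above is routine and the claim reduces to matching two copies of the same expression.
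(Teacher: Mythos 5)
Your proof is correct and follows essentially the same route as the paper: both compute the factor set values via the defect of the section $\rep$, namely $\w\al(x,y)=\gamma(\rep(x),\rep(y))\,\epsilon(z)$ with $z\in\Cent_A(\o G)$, using that $\calP$ and $\calP'$ share the factor set $\gamma$ and act by the scalar $\epsilon$ on $\Cent_A(\o G)$, and then observe the identical formula for $\w\al'$. The only cosmetic difference is your extra factor $\gamma(\rep(xy),c(x,y))^{-1}$, which the paper avoids by invoking the coset property of associated projective representations (it equals $1$), but since it appears identically in both computations your comparison is equally valid.
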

\begin{proof}
For $x,y\in \Aut(S)_{N,\psi}$ the factor set $\w \al$ of $\w \calP$ satisfies
\[ \w \calP(x)\w \calP(y)= \calP(\rep(x))\calP(\rep(y))=\gamma(\rep(x), \rep(y)) \calP(\rep(x)\rep(y)). \]
and for $z= \rep(x)\rep(y)\rep(xy)^{-1}\in \Cent_A(G)$.
\[ \w \calP(x)\w \calP(y)=\gamma(\rep(x), \rep(y)) \calP(z\rep(xy) )= \gamma(\rep(x), \rep(y)) \epsilon(z)\w\calP(xy ).\]
This shows $\w \al(x,y)= \gamma(\rep(x), \rep(y)) \epsilon(z)$. 
Similarly one can determine the value of $\w \al'$ and obtains the analogous formula. This proves the statement.
\end{proof}

The property of factor sets described in the above lemma implies \cohom{}. 

\begin{lem}\label{lem15}
Let $\rep: S\rightarrow G$ be a section as before, $\chi\in \Irrl(G)$ and $\psi\in \Irrl(N)$ such that $\Aut(S)_{N,\chi}=\Aut(S)_{N,\psi}$ and $\Irr(\Z(G)\mid \chi)=\Irr(\Z(G)\mid \psi)$.
Assume there is a projective representation $\w \calP$  of $\Aut(S)_\chi$ with finite factor set $\w\al$, such that $\calP_0:=\restr\w\calP|S$ is a projective representation associated to $\chi$ using $\rep$. Assume there is a projective representation  $\w \calP'$ of $\Aut(S)_{N,\psi}$ with factor set $\w \al'$, such that  $\calP'_0:= \restr \w \calP'|{N/\Z(G)}$ is a projective representation associated to $\psi$ using $\rep$. If $\w \al'=\restr{\w \al}|{\Aut(S)_{N,\psi} \times \Aut(S)_{N,\psi}}$ then the pair $(\chi,\psi)$ satisfies \cohom{}. 
\end{lem}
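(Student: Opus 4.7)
Since conditions (i) and (ii) of \cohom{} are direct hypotheses of the lemma, the task reduces to constructing a group $A$ and a character $\epsilon\in\Irr(C)$ witnessing condition (iii). The plan is to build $A$ as a central extension of $\Aut(S)_\chi$ that absorbs the factor set $\w\al$, so that $\chi\cdot\epsilon$ extends to a \emph{linear} representation of $A$; the compatibility $\w\al'=\restr{\w\al}|{\Aut(S)_{N,\psi}\times\Aut(S)_{N,\psi}}$ will then ensure $\psi\cdot\epsilon$ extends linearly to $\NNN_A(\o N)$, making both cohomology classes in (iii) vanish.

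\textbf{Construction of $A$ and $\epsilon$.} Let $H:=\Aut(S)_\chi$, identified with $\Aut(\o G)_\chi$. Since $\w\al$ is finite, its values generate a finite cyclic subgroup $C_0\leq\CC^\times$, and $\w\calP$ lifts to a linear representation $\Pi\colon\hat H\to\GL_{\chi(1)}(\CC)$, where $\hat H$ is the central extension of $H$ by $C_0$ with cocycle $\w\al$. The hypothesis that $\calP_0=\restr{\w\calP}|{S}$ is associated to $\chi$ via $\rep$ unpacks to $\restr{\w\al}|{S\times S}(s,t)=\lambda(\rep(s)\rep(t)\rep(st)^{-1})$, where $\lambda$ is the central character of $\chi$ on $\Z(\o G)$. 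This identifies the preimage of $\mathrm{Inn}(\o G)$ in $\hat H$ with the pushout $(\o G\times C_0)/\{(z,\lambda(z)^{-1}):z\in\Z(\o G)\}$, and since $\chi$ is faithful on $\Z(\o G)$ (by construction of $\o G$), the map $g\mapsto[(g,1)]$ embeds $\o G$ as a normal subgroup of $\hat H$. Set $A:=\hat H$ with this identification, $C:=C_0=\Cent_A(\o G)$, and let $\epsilon$ be the inclusion $C_0\hookrightarrow\CC^\times$.

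\textbf{Verifying (iii).} Routine checks give $\o G\lhd A$, $A_\chi=A$ (the action on $\o G$ factors through $\Aut(\o G)_\chi$), $C$ abelian, $\restr\epsilon|{\Z(\o G)}=\lambda$, and $A/(\o G C)=H/\mathrm{Inn}(\o G)=\Out(\o G)_\chi$. By construction, $\restr\Pi|{\o G C}$ is linear and affords $\chi\cdot\epsilon$, so $\calP:=\Pi$ is a projective representation of $A$ with trivial factor set, giving $[\chi\cdot\epsilon]=0$. By a Frattini argument $\NNN_A(\o N)$ is the preimage in $\hat H$ of $\NNN_H(\o N)=\Aut(S)_{N,\psi}$; the hypothesis $\w\al'=\restr{\w\al}|{\Aut(S)_{N,\psi}\times\Aut(S)_{N,\psi}}$ means $\w\calP'$ lifts to a linear $\Pi'$ on $\NNN_A(\o N)$, and the analogous hypothesis that $\restr{\w\calP'}|{N/\Z(G)}$ is associated to $\psi$ via $\rep$ makes $\restr{\Pi'}|{\o N C}$ afford $\psi\cdot\epsilon$, so $\calP':=\Pi'$ has trivial factor set and $[\psi\cdot\epsilon]=0$. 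Both classes being trivial, they agree under the natural isomorphism $A/(\o G C)\cong\NNN_A(\o N)/(\o N C)$.

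\textbf{Main obstacle.} The delicate point is the identification of the preimage of $\mathrm{Inn}(\o G)$ in $\hat H$ with the pushout $(\o G\times C_0)/\{(z,\lambda(z)^{-1})\}$, which simultaneously provides the embedding $\o G\hookrightarrow A$ and ensures that $\restr\Pi|{\o G}$ realizes $\chi$. This requires unpacking the compact hypothesis ``$\restr{\w\calP}|{S}$ is associated to $\chi$ via $\rep$'' as the explicit coboundary formula for $\restr{\w\al}|{S\times S}$; once that is in hand, the rest of the argument is essentially formal bookkeeping with central extensions.
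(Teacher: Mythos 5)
Your proposal is correct and follows essentially the same route as the paper's own proof: both build the central extension of $\Aut(S)_\chi$ by a finite cyclic group via the finite factor set $\w\al$ (your $\hat H$ is the paper's $E$), lift $\w\calP$ and $\w\calP'$ to linear representations there, embed $\o G$ using the coboundary identity coming from ``associated to $\chi$ via $\rep$'', and take $\epsilon$ to be the tautological character of the central cyclic subgroup, so that $[\chi\cdot\epsilon]$ and $[\psi\cdot\epsilon]$ are both trivial and hence agree. The only differences are presentational (your pushout description of the preimage of $\mathrm{Inn}(\o G)$ versus the paper's explicit map $f(\rep(s)z)=(s,\nu(z))$).
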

\begin{proof}
Let $\Cy_e\leq \CC^*$ be a finite cyclic group such that $\w \al(x,x')\in \Cy_e$ for all $x,x'\in \Aut(S)_\chi$. The factor set $\w \al$ defines a central extension $E$ of $\Aut(S)_\chi$: the elements of $E$ are $(a,c)$ ($a\in \Aut(S)_\chi$, $c\in \Cy_e$) and the multiplication is given by $(a,c)(a',c')= (aa', \w \al (a,a') cc')$ for $a,a'\in \Aut(S)_\chi$ and $c,c'\in \Cy_e$. The projective representation $\w \calP$ can be lifted to a linear representation $\w \calD$ of $E$, which is defined by $\w \calD(a,c)=c \w \calP(a)$ ($a\in A$, $c\in \Cy_e$).

For $\nu\in \Irr(\Z( G)\mid \chi)$  and $Z:=\ker(\nu)$ the map \[ f:\o G:= G/Z\rightarrow E \text{ with } \rep(s)z\mapsto (s,\nu(z)) \] defines an injective morphism. The linear representation $\restr\w \calD|{f(\o G) \Cy_e}$ affords $\chi\cdot\epsilon$ with $\epsilon(c)=c$ and therefore the associated cohomological element $[\chi\cdot \epsilon]$ is trivial. The group $E$ satisfies the desired group theoretic properties of $A$ in \cohom{}.

Now we consider the local situation: the isomorphism $\NNN_E\left(f(N/Z)\right)\Cy_e/\Cy_e \cong \Aut(S)_{N,\psi}$ holds. Because of $\w \al'=\restr \w \al|{\Aut(S)_{N,\psi} \times \Aut(S)_{N,\psi}}$ we can define a linear representation of $\NNN_E(f(N/Z))$ by 
\[ \w \calD'(a,c)= c \w \calP'(a). \]
By definition the afforded character on $f(N)\Cy_e$ is $\psi\cdot \epsilon$ and extends to $E$, hence $[\psi\cdot\epsilon]=1$. This implies that $(\chi,\psi)$ satisfies \cohom{}. 
\end{proof}

It is clear that an automorphism $\si\in \Aut(S)= \Aut(\ES)$ acts on projective representations of $\si$--stable subgroups of $\Aut(S)$. 
\begin{lem}\label{lem_cohom_conj}
Let $\si\in \NNN_{\Aut(S)}(N/\Z(G))$, $\chi\in \Irrl(\ES)$ and $\psi\in \Irrl(N)$, such that $(\chi, \psi)$ satisfies \cohom{}. Then \cohom{} holds for $(\chi^\si,\psi^\si)$.
\end{lem}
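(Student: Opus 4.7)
The plan is to verify \cohom{} for $(\chi^\si,\psi^\si)$ by means of the criterion of Lemma~\ref{lem15}, using that the assumed \cohom{} for $(\chi,\psi)$ produces, via \S\ref{subsec_25} and Lemma~\ref{lem_prop_cohom}, projective representations $\w\calP$ of $\Aut(S)_\chi$ and $\w\calP'$ of $\Aut(S)_{N,\psi}$ whose factor sets $\w\al,\w\al'$ satisfy $\w\al'=\restr{\w\al}|{\Aut(S)_{N,\psi}\times \Aut(S)_{N,\psi}}$. The idea is to conjugate both projective representations by $\si$ and to check that the two trivial obstructions (stabilizer equality, central character equality) as well as the crucial restriction relation between factor sets are preserved.

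First I would record the easy equivariance facts. Since $\si\in\Aut(S)$ normalizes $N/\Z(G)$, we have $\Aut(S)_{\chi^\si}=\si^{-1}\Aut(S)_\chi\si$ and $\Aut(S)_{N,\psi^\si}=\si^{-1}\Aut(S)_{N,\psi}\si$, so the assumed equality $\Aut(S)_{N,\chi}=\Aut(S)_{N,\psi}$ conjugates to $\Aut(S)_{N,\chi^\si}=\Aut(S)_{N,\psi^\si}$. Similarly $\si$ permutes $\Irr(\Z(G))$, hence $\Irr(\Z(G)\mid\chi^\si)=\Irr(\Z(G)\mid\chi)^\si=\Irr(\Z(G)\mid\psi)^\si=\Irr(\Z(G)\mid\psi^\si)$.

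Next I would define the conjugated projective representations by
\[
\w\calP^\si(x):=\w\calP(\si x\si^{-1})\quad (x\in \Aut(S)_{\chi^\si}),\qquad \w\calP'^\si(x):=\w\calP'(\si x\si^{-1})\quad (x\in \Aut(S)_{N,\psi^\si}),
\]
and a new section $\rep^\si$ for $\Aut(S)_{\chi^\si}$ obtained by conjugating $\rep$ with (a lift of) $\si$. A direct check shows that $\restr{\w\calP^\si}|{S}$ affords $\chi^\si$ and is associated to $\chi^\si$ via $\rep^\si$; analogously $\restr{\w\calP'^\si}|{N/\Z(G)}$ affords and is associated to $\psi^\si$. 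A straightforward computation of factor sets yields $\w\al^\si(x,y)=\w\al(\si x\si^{-1},\si y\si^{-1})$ and likewise for $\w\al'^\si$; both are still finite. Since $x,y\in\Aut(S)_{N,\psi^\si}$ translates into $\si x\si^{-1},\si y\si^{-1}\in \Aut(S)_{N,\psi}$, the restriction identity for $\w\al,\w\al'$ transfers to
\[
\w\al'^\si=\restr{\w\al^\si}|{\Aut(S)_{N,\psi^\si}\times \Aut(S)_{N,\psi^\si}}.
\]
By Lemma~\ref{lem15}, this is exactly what is needed to conclude that $(\chi^\si,\psi^\si)$ satisfies \cohom{}.

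The only delicate point I foresee is the bookkeeping around the section $\rep$: the conjugated $\rep^\si$ lands in a conjugate subgroup of $A$, and one has to be sure that the scalar contributions coming from $\Cent_A(\o G)$ (the $\epsilon(z)$ terms in Lemma~\ref{lem_prop_cohom}) transform correctly, so that the explicit formulas for $\w\al^\si$ and $\w\al'^\si$ really coincide on the smaller domain. Once this verification is made, the rest is formal and the conclusion follows from Lemma~\ref{lem15}.
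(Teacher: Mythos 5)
Your proposal is correct and follows essentially the same route as the paper: conjugate the projective representations $\w\calP$, $\w\calP'$ and the section $\rep$ by $\si$, observe that the factor sets transform by $\w\beta(a^\si,a'^\si)=\w\al(a,a')$ so the restriction identity is preserved, and conclude via Lemma \ref{lem15}. The "delicate point" you flag about the section is handled in the paper exactly as you suggest, by taking $\rep'(s^\si)=\rep(s)^\si$.
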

\begin{proof}
Let $A$, $\rep: \Aut(S)_\chi\rightarrow A$,  $\calP$, $\w \al$, $\calP'$ and $\w \al'$ be associated to $(\chi, \psi)$ as in \ref{subsec_25} and Lemma \ref{lem_prop_cohom}.
For $\chi^\si$ we have the following: there is a projective representation $\calQ$ of $(\Aut(S)_\chi)^\si=\Aut(S)_{\chi^\si}$ with 
$ \calQ(a)=\calP(a^{\si^{-1}})$, where $\restr \calQ|S $ is a projective representation of $S$.
Using $\rep': S\rightarrow G$ with $\rep'(s^\si)=\rep(s)^\si$ lifts this to a linear representation of $G$ affording $\chi^\si$. The factor set $\w \beta$ of $\calQ$ is finite and $ \w \beta(a^\si,{a'}^\si)= \w \al(a,a')$.
Further the projective representation $\calQ'$ of $(\Aut(S)_{N,\psi})^\si=\Aut(S)_{N,\psi^\si}$ with 
$ \calQ'(a)=\calP'(a^{\si^{-1}})$
extends a projective representation of $N/\Z(G)$, which can be obtained from a linear representation affording $\psi^\si$ using $\rep'$. The corresponding factor set is $\w \beta'$ with $ \w \beta'(a^\si,{a'}^\si)= \w \al'(a,a')$ for $a,a'\in \Aut(S)_{N,\psi}$.
It follows that the equation $\restr \w \beta|{\Aut(S)_{N,\psi^\si} \times \Aut(S)_{N,\psi^\si}}=\w \beta'$ holds and by Lemma \ref{lem15} the pair $(\chi^\si,\psi^\si)$ satisfies \cohom.
\end{proof}

In order to simplify checking \cohom{} we present another approach. As one can see for $S=\PSU_n(q)$ with $\p\mid (q-1)$ in \cite{Spaeth4} it is quite difficult to define a group $A$ with the desired properties, especially such that $\chi$ extends to an $A$--invariant character of $\o G \CentAoG$. By the next statement this requirement is replaced by conditions on projective representations. 

\begin{prop} \label{propcohom}
Let $\chi\in \Irrl(G)$, $\psi\in \Irrl(N)$ with $\Irr(\Z(G)\mid\chi)= \Irr(\Z(G)\mid\psi)$, and $\Aut(G)_{N,\chi}=\Aut(G)_{N,\psi}$. Further let $A$ be a group with $G\lhd A$ and the following properties:
\renewcommand{\labelenumi}{(\roman{enumi})}\renewcommand{\theenumi}{\thesubsection{ }(\roman{enumi})}
\begin{itemize}
\item $A/\Cent_A(G)\cong \Aut(S)_\chi$ via the canonical isomorphism,
\item there exists a projective representation $\calP$ of $A$ extending a linear representation of $G$ affording $\chi$, with finite factor set $\beta$ and $\calP(ca)= \epsilon(c)\calP(a)$ for every $c\in \Cent_A(G)$, where $\epsilon\in \Irr(\Cent_A(G))$ is a linear character with $\Irr(\Z(G)\mid \chi)=\Irr(\Z(G)\mid \epsilon)$, and 
\item there exists a projective representation $\calP'$ of $A':=\NNN_A(N)$ extending a linear representation of $N$ affording $\psi$, with finite factor set $\restr\beta|{A'\times A'}$ and $\calP'(ca)= \epsilon(c)\calP'(a)$ for every $c\in \Cent_A(G)$ and $a\in A'$.
\end{itemize} 
Then $(\chi,\psi)$ satisfies \cohom.
\end{prop}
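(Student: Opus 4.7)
The plan is to reduce this to Lemma \ref{lem15}. That lemma packages \cohom{} in terms of a pair of projective representations, one of $\Aut(S)_\chi$ and one of $\Aut(S)_{N,\psi}$, whose factor sets restrict correctly. The hypotheses here provide projective representations of $A$ and $A':=\NNN_A(N)$ instead, so the task is to push them down through the canonical isomorphism $A/\Cent_A(G)\cong \Aut(S)_\chi$ by choosing a section, mirroring the construction in \S\ref{subsec_25}.

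\textbf{Step 1 (choice of section).} The identification $A/\Cent_A(G)\cong \Aut(S)_\chi$ together with $\Aut(G)_{N,\chi}=\Aut(G)_{N,\psi}$ identifies $A'/\Cent_A(G)$ with $\Aut(S)_{N,\psi}$. I first pick a set-theoretic section of $A'\twoheadrightarrow \Aut(S)_{N,\psi}$, and extend it to a section $\rep:\Aut(S)_\chi\to A$ of the full projection; the result still satisfies $\rep(\Aut(S)_{N,\psi})\subseteq A'$.

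\textbf{Step 2 (descended projective representations).} Define $\w\calP(x):=\calP(\rep(x))$ on $\Aut(S)_\chi$ and $\w\calP'(x):=\calP'(\rep(x))$ on $\Aut(S)_{N,\psi}$. Writing $c_{x,y}:=\rep(x)\rep(y)\rep(xy)^{-1}\in \Cent_A(G)$, the scalar behaviour $\calP(ca)=\epsilon(c)\calP(a)$ yields
\[
\w\calP(x)\w\calP(y)
=\beta(\rep(x),\rep(y))\,\calP(c_{x,y}\rep(xy))
=\beta(\rep(x),\rep(y))\,\epsilon(c_{x,y})\,\w\calP(xy),
\]
so $\w\calP$ has finite factor set $\w\al(x,y)=\beta(\rep(x),\rep(y))\,\epsilon(c_{x,y})$. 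Repeating the computation for $\w\calP'$, using that its factor set is $\restr\beta|{A'\times A'}$ and that $\calP'$ has the same scalar behaviour on $\Cent_A(G)$, gives $\w\al'(x,y)=\beta(\rep(x),\rep(y))\,\epsilon(c_{x,y})$ for $x,y\in \Aut(S)_{N,\psi}$, which by construction of $\rep$ equals $\w\al(x,y)$. Thus $\w\al'=\restr{\w\al}|{\Aut(S)_{N,\psi}\times \Aut(S)_{N,\psi}}$.

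\textbf{Step 3 (application of Lemma \ref{lem15}).} Since $\calP$ extends a linear representation of $G$ affording $\chi$, its restriction to $\rep(S)\subseteq G\Cent_A(G)$ provides, after composition with $\rep|_S$, a projective representation of $S$ associated to $\chi$ via $\rep$; similarly for $\calP'$, $\psi$ and the image of $N/\Z(G)$. Combined with the central-character and stabilizer equalities assumed in the proposition, and the restriction identity of the factor sets from Step 2, all hypotheses of Lemma \ref{lem15} are satisfied, and that lemma yields \cohom{} for $(\chi,\psi)$.

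I expect the main subtlety to lie in Step 1: one must confirm that the canonical identification really sends $\NNN_A(N)/\Cent_A(G)$ onto the full group $\Aut(S)_{N,\psi}$ (not merely into it), so that a compatible section exists. Everything after that is a bookkeeping computation modelled on the proof of Lemma \ref{lem_prop_cohom}.
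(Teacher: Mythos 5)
Your proof is correct and follows essentially the same route as the paper: descend $\calP$ and $\calP'$ through a section $\rep$ to projective representations of $\Aut(S)_\chi$ and $\Aut(S)_{N,\psi}$, compute both factor sets as $\beta(\rep(x),\rep(y))\,\epsilon\bigl(\rep(x)\rep(y)\rep(xy)^{-1}\bigr)$, and conclude by Lemma \ref{lem15}. The subtlety you flag in Step 1 is in fact automatic, so any section works as in the paper: since $N\leq G$ and $\Cent_A(G)$ centralizes (hence normalizes) $N$, any $a\in A$ whose induced automorphism stabilizes $N$ already lies in $\NNN_A(N)=A'$, so every element of $\Aut(S)_{N,\psi}\subseteq \Aut(S)_\chi$ lifts into $A'$ under any section.
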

\begin{proof}
Using a section $\rep:\Aut(S)_\chi \rightarrow A$ as before one obtains a projective representation $\calQ$ of $\Aut(S)_\chi$ via
\[ \calQ(\o a) = \calP(\rep(\o a)) \text{ for } \o a \in \Aut(S)_\chi,\]
as every $\calP(c)$ ($c\in \Cent_A(G)$) is a scalar matrix. 
The associated factor set $\al:\Aut(S)_\chi\times \Aut(S)_\chi\rightarrow \CC^*$  satisfies 
\[ \al(\overline a, \overline a')= \beta(a,a') \epsilon( aa' \rep(\overline a \overline a')^{-1}),\]
for every $\overline a, \overline a' \in \Aut(S)_\chi$, $a:=\rep(\overline a)$ and $a':=\rep(\overline a')$. 

Analogously the projective representation $\calQ'$ of $\Aut(S)_{N,\psi}$ with \[ \calQ'( \o a) = \calP'(\rep(\o a)) \text { for } \overline a\in \Aut(S)_{N\psi}.\]
has a factor system $\al'$ with
\[ \al'(\overline a, \overline a')= \beta(a,a') \epsilon( aa' \rep(\overline a \overline a')^{-1}),\]
for $\overline a, \overline a' \in \Aut(S)_{N,\psi}$, $a:=\rep(\overline a)$ and $a':=\rep(\overline a')$. This implies $\al'=\restr\al|{\Aut(S)_{N,\psi}\times \Aut(S)_{N,\psi}}$. By Lemma \ref{lem15} this proves the statement.
\end{proof}

Finally we state a quite technical theorem, which we apply in the proof of our main result. One reconstructs the bijection needed in Lemma \ref{lem21} by a second one, whose construction is easier in the situation of Theorem \ref{hauptthm}. Here we use the term of maximal extensibility: for two finite groups $X_1\lhd X_2$ {\it maximal extensibility} holds, if every character $\chi\in \Irr(X_1)$ can be extended as irreducible character to its inertia group in $X_2$.

\renewcommand{\labelenumi}{(\roman{enumi})}\renewcommand{\theenumi}{\thesubsection{ }(\roman{enumi})}
\begin{theorem}\label{hauptthm2}
Let $G$ be the maximal central perfect extension of a simple non--abelian group $S$ and $Q$ a Sylow $\p$--subgroup for some prime  $\p$ with $\p\mid |S|$.

Assume the following: 
\begin{enumerate}
\item there exists a group $\Gu$ with $G\lhd \Gu$ and a finite group $D$ such that $\Gu\rtimes D$ is defined, $\Gu/G$ is abelian, $\Cent_{\Gu\rtimes D}(G)=\Z(\Gu)$, $G\lhd \Gu\rtimes D$ and $\Gu\rtimes D$ induces on $G$ all automorphisms of $G$,
\item there exists a $(\Gu\rtimes D)_Q$-stable group $N< G$ with $\NNN_G(Q)\leq N$ and $N\neq G$,
\item \label{hauptprop_maxext}maximal extensibility holds for $G\lhd \Gu$ and $N\lhd \Nu:=\NNN_\Gu(N)$,
\item there exists a $(\Gu\rtimes D)_Q$--equivariant bijection $ \Omegau: \calG\rightarrow \calN$ for 
$ \calG=\break\Set{\chi \in \Irr(\Gu)|\Irr(G\mid \chi) \subset \Irrl(G)}$ and $ \calN=\Set{\psi \in \Irr(\Nu)|\Irr(N\mid \psi) \subset \Irrl(N)}$ with 
\begin{itemize}
\item $ \Omegau(\calG\cap\Irr(\Gu\mid \nu))= \calN\cap\Irr(\Nu\mid \nu)$ for every $\nu \in \Irr(\Z(\Gu))$,
and
\item \label{Omega_u_epsilon_equiv} 
$\Omegau(\chi\la)= \Omegau(\chi)\restr \la|{\Nu}$ for every $\la \in \Irr(\Gu/G)$.
\end{itemize}
\item \label{Star-cond-global}
for every $\chi\in \calG$ there exists some $\chi_0\in \Irrl(G\mid \chi)$ such that $(\Gu\rtimes D)_{\chi_0}= \Guchinull\rtimes D_{\chi_0}$ and $\chi_0$ extends to $(G \rtimes D)_{\chi_0}$, and
\item \label{Star-cond-local}
for every $\psi\in \calN$ there exists some $\psi_0\in \Irr(N\mid \psi)$ such that the group $O:=(\Gu\rtimes D)_{N,\psi_0}G$ satisfies $O= (\Gu\cap O) \rtimes (D\cap O)$ and $\psi_0$ extends to $(G\rtimes D)_{N,\psi_0}$,
\end{enumerate}
Then the inductive McKay condition holds for $S$ and $\p$.
\end{theorem}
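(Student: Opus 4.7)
The strategy is to verify the hypotheses of Lemma~\ref{lem21}: item~\ref{lem21_prop_N} is immediate from~(ii), so the substance is to build an $\Aut(G)_Q$-equivariant bijection $\OmegaNES\colon\Irrl(G)\to\Irrl(N)$ compatible with central characters, and then to check \cohom{} for every pair $(\chi_0,\OmegaNES(\chi_0))$.

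Construction of $\OmegaNES$. Given $\chi_0\in\Irrl(G)$, maximal extensibility~(iii) lifts $\chi_0$ to an extension $\w\chi\in\Irr(\Guchinull)$ and identifies $\Irr(\Gu\mid\chi_0)$ with $\Irr(\Guchinull/G)$ via $\la\mapsto(\w\chi\la)^\Gu$; the same applies to $N\lhd\Nu$. Condition~(iv) says $\Omegau$ is equivariant under the twist action of $\Irr(\Gu/G)$, so composing $\Omegau$ with these two Clifford correspondences yields, for each $\chi\in\calG$ lying above $\chi_0$, a bijection $\Irr(G\mid\chi)\to\Irr(N\mid\Omegau(\chi))$; gluing over $\chi_0$ defines $\OmegaNES$. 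Compatibility with $\Irr(\Z(G))$ descends from the analogous property of $\Omegau$ on $\Irr(\Z(\Gu))$ by restriction, and the $\Aut(G)_Q$-equivariance follows from the $(\Gu\rtimes D)_Q$-equivariance of $\Omegau$ combined with~(i), which says the surjection $\Gu\rtimes D\rightarrow\Aut(G)$ has kernel $\Z(\Gu)$ acting trivially on $N$ and $G$.

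Verification of \cohom{} via Proposition~\ref{propcohom}. Fix $\chi_0\in\Irrl(G)$ and set $\psi_0:=\OmegaNES(\chi_0)$. By Lemma~\ref{lem_cohom_conj} it suffices to treat one $\Aut(G)_Q$-conjugate of $(\chi_0,\psi_0)$; using~(v) and~(vi) I may choose representatives with $(\Gu\rtimes D)_{\chi_0}=\Guchinull\rtimes D_{\chi_0}$, with $\chi_0$ extending linearly to $(G\rtimes D)_{\chi_0}$, and the analogous semidirect-plus-extension property for $\psi_0$. Let $A$ be the quotient of $(\Gu\rtimes D)_{\chi_0}$ by the kernel in $\Z(\Gu)$ of the central character of $\chi_0$. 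The linear extension of $\chi_0$ to $(G\rtimes D)_{\chi_0}$ from~(v) and an extension of $\chi_0$ to $\Guchinull$ from~(iii) combine via the semidirect product $(\Gu\rtimes D)_{\chi_0}=\Guchinull\rtimes D_{\chi_0}$ into a projective representation $\calP$ of $A$ with finite factor set $\beta$ and scalar values on $\Cent_A(G)$; analogously~(vi) and maximal extensibility for $N\lhd\Nu$ produce $\calP'$ of $\NNN_A(N)$ associated to $\psi_0$.

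The main obstacle is to show that the factor set of $\calP'$ equals $\restr\beta|{\NNN_A(N)\times\NNN_A(N)}$. Decomposing according to the semidirect structures from~(v)--(vi) separates the factor set into a ``$D$-part'', which is trivial on both sides because $\chi_0$ and $\psi_0$ admit genuine linear extensions to their respective $G\rtimes D$-stabilisers, and a ``$\Gu/G$-part'', which is governed precisely by the identity $\Omegau(\chi\la)=\Omegau(\chi)\restr\la|{\Nu}$ in~(iv): that identity forces the Clifford extensions of $\chi_0$ and of $\psi_0$ used in the construction of $\calP,\calP'$ to correspond under $\Omegau$, so the two factor sets agree after restriction. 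Proposition~\ref{propcohom} then delivers \cohom{} for $(\chi_0,\psi_0)$, and Lemma~\ref{lem21} yields the inductive McKay condition for $S$ and $\p$.
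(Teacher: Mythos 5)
There is a genuine gap, and it sits exactly where the paper's proof does its real work: the construction and equivariance of $\OmegaNES$. Your gluing step asserts that composing $\Omegau$ with the Clifford correspondences "yields a bijection $\Irr(G\mid\chi)\to\Irr(N\mid\Omegau(\chi))$" and that $\Aut(G)_Q$-equivariance "follows from the $(\Gu\rtimes D)_Q$-equivariance of $\Omegau$ combined with (i)". Neither claim holds as stated. The Clifford correspondences do not single out which constituent of $\restr{\Omegau(\chi)}|{N}$ should correspond to a given $\chi_0\in\Irrl(G\mid\chi)$: both $\Irr(G\mid\chi)$ and $\Irr(N\mid\Omegau(\chi))$ are single orbits under $\Gu$ resp.\ $\Nu$, and matching them requires choosing basepoints. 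Moreover, an element $x\in(\Gu\rtimes D)_Q$ fixing $\chi_0$ need not fix $\chi$; it only satisfies $\chi^x=\chi\la$ for some $\la\in\Irr(\Gu/G)$, so by \ref{Omega_u_epsilon_equiv} it stabilizes the set $\Irr(N\mid\Omegau(\chi))$ but has no reason to fix your chosen $\psi_0$. This is precisely why the paper proves the stabilizer equality $(\Gu\rtimes D)_{N,\chi_0}=(\Gu\rtimes D)_{N,\psi_0}$ for the chosen constituents, using \ref{Star-cond-global} and \ref{Star-cond-local} together with the identities $\Guchinull=\bigcap_{\chi\la=\chi}\ker(\la)$, $\Nupsinull=\bigcap_{\psi\la=\psi}\ker(\la)$, the twist-equivariance of $\Omegau$, and the Frattini-type identity $G\NNN_{G\rtimes D}(N)=G\rtimes D$; only then can one define $\OmegaNES$ on a $(\Gu\rtimes D)$-transversal $\calG_0$ and extend it by conjugation. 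In your proposal, hypotheses (v) and (vi) enter only in the cohomology step, so the bijection you "glue" is neither shown to be well defined independently of the choices nor shown to be $\Aut(G)_Q$-equivariant.

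The same missing stabilizer equality also undercuts your \cohom{} verification: $\Aut(G)_{N,\chi_0}=\Aut(G)_{N,\psi_0}$ is a hypothesis of Proposition \ref{propcohom} (and part of the definition of \cohom{}), and you assume it tacitly when you say you "may choose representatives" with the semidirect-product and extension properties. Your outline of the factor-set comparison itself (build $\calP$ from an extension of $\chi_0$ to $\Guchinull$ and a linear extension to $(G\rtimes D)_{\chi_0}$, likewise for $\psi_0$, and use \ref{Omega_u_epsilon_equiv} to match the $\Gu/G$-parts, then invoke Proposition \ref{propcohom} and Lemma \ref{lem_cohom_conj}) is essentially the paper's argument. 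To repair the proposal you must insert, before any gluing, the computation that for the distinguished constituents $\chi_0,\psi_0$ supplied by \ref{Star-cond-global} and \ref{Star-cond-local} one has $(\Gu\rtimes D)_{N,\chi_0}=(\Gu\rtimes D)_{N,\psi_0}$, and then define $\OmegaNES$ on a transversal and transport it by the $(\Gu\rtimes D)_Q$-action.
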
 

Before constructing a bijection $\OmegaNES: \Irrl(G)\rightarrow \Irrl(N)$ with the properties required in Lemma \ref{lem21b}, we check \cohom{} for some pairs. 
\begin{lem}
Assume the situation of Theorem \ref{hauptthm2}. Let $\chi\in \calG$, $\psi:=\Omegau(\chi)$, $\chi_0\in \Irrl(G\mid \chi)$ from \ref{Star-cond-global} and $\psi_0\in \Irrl(G\mid \psi)$ from \ref{Star-cond-local} with $(\Gu\rtimes D)_{N,\chi_0}= (\Gu\rtimes D)_{N,\psi_0}$. Then the pair $(\chi_0,\psi_0)$ satisfies \cohom.
\end{lem}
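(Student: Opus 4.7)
My plan is to verify the hypotheses of Proposition~\ref{propcohom} for the pair $(\chi_0,\psi_0)$ and then invoke it. Set $A:=(\Gu\rtimes D)_{\chi_0}$, which by hypothesis~\ref{Star-cond-global} of Theorem~\ref{hauptthm2} decomposes as $\Guchinull\rtimes D_{\chi_0}$. Since $\Cent_{\Gu\rtimes D}(G)=\Z(\Gu)$ is contained in $A$, one has $\Cent_A(G)=\Z(\Gu)$, and the canonical surjection $\Gu\rtimes D\to\Aut(S)$ restricts to an isomorphism $A/\Cent_A(G)\cong\Aut(S)_{\chi_0}$. The equalities $\Irr(\Z(G)\mid\chi_0)=\Irr(\Z(G)\mid\psi_0)$ and $\Aut(G)_{N,\chi_0}=\Aut(G)_{N,\psi_0}$ needed by Proposition~\ref{propcohom} follow from hypothesis~(iv) of Theorem~\ref{hauptthm2} applied to $\chi$ and $\psi=\Omegau(\chi)$, respectively from the assumption $(\Gu\rtimes D)_{N,\chi_0}=(\Gu\rtimes D)_{N,\psi_0}$ of the lemma after quotienting by $\Z(\Gu)$.

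For the global projective representation $\calP$ I combine two linear extensions of $\chi_0$: the extension $\tilde\chi_0$ to $\Guchinull$ supplied by maximal extensibility in hypothesis~\ref{hauptprop_maxext}, affording a linear representation $\mathcal{R}$, and the extension $\hat\chi_0$ to $(G\rtimes D)_{\chi_0}=G\rtimes D_{\chi_0}$ supplied by hypothesis~\ref{Star-cond-global}, affording a linear representation $\mathcal{S}$. For $x\in\Guchinull$ and $d\in D_{\chi_0}$ I set $\calP(xd):=\mathcal{R}(x)\mathcal{S}(d)$. Verifying the cocycle identity reduces to comparing the representation $x'\mapsto\mathcal{S}(d)\mathcal{R}(x')\mathcal{S}(d)^{-1}$ of $\Guchinull$ with $\mathcal{R}({}^dx')$; both extend $\mathcal{R}|_G$ from $G$ to $\Guchinull$, so they differ by a linear character of the finite group $\Guchinull/G$. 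Consequently the factor set $\beta$ of $\calP$ is finite in the sense of the excerpt, and the relation $\calP(ca)=\epsilon(c)\calP(a)$ for $c\in\Z(\Gu)$ holds with $\epsilon:=\tilde\chi_0|_{\Z(\Gu)}$. The local analog $\calP'$ on $A':=(\Gu\rtimes D)_{N,\psi_0}$ is built identically, using the extension of $\psi_0$ to $\Nupsinull$ (maximal extensibility of $N\lhd\Nu$) and to $(G\rtimes D)_{N,\psi_0}$ (hypothesis~\ref{Star-cond-local}); this produces a finite factor set $\beta'$ together with the analogous scalar relation.

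The crucial step---and the main obstacle---is to ensure that $\beta'=\restr\beta|{A'\times A'}$ literally, as Proposition~\ref{propcohom} demands. By the formulas above, $\beta$ and $\beta'$ are determined by the action of $D_{\chi_0}\cap A'=D_{\psi_0}$ on the chosen extensions $\tilde\chi_0$ and $\tilde\psi_0$, each of which is specified only up to multiplication by a linear character of $\Guchinull/G$, respectively $\Nupsinull/N$. The equivariance property~\ref{Omega_u_epsilon_equiv} of $\Omegau$ is designed to make these two torsors of choices match: since $\Omegau$ commutes with tensoring by $\Irr(\Gu/G)$, one can choose $\tilde\chi_0$ and $\tilde\psi_0$ compatibly, and the resulting cocycles then agree on $A'\times A'$. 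The technical heart of the argument is precisely translating the equivariance of $\Omegau$, stated on $\Irr(\Gu)$ and $\Irr(\Nu)$, into a usable compatibility of the extensions living on the smaller stabilizers $\Guchinull$ and $\Nupsinull$. Once this is in place, Proposition~\ref{propcohom} applies and gives \cohom{} for $(\chi_0,\psi_0)$.
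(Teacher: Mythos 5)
Your setup is exactly the paper's: you build $\calP$ on $(\Gu\rtimes D)_{\chi_0}=\Guchinull\rtimes D_{\chi_0}$ as a product $\calD_1(x_1)\calD_2(x_2)$ of a linear representation of $\Guchinull$ affording an extension $\w\chi_0$ of $\chi_0$ (maximal extensibility) and a linear representation of $(G\rtimes D)_{\chi_0}$ affording the extension from \ref{Star-cond-global}, do the analogous local construction, and aim at Proposition \ref{propcohom}. But the proof stops precisely where the content of the lemma begins: the identity $\beta'=\restr\beta|{A'\times A'}$ is asserted ("the equivariance property is designed to make these choices match", "once this is in place"), not proved. By the computation the paper quotes from Lemma 5.5 of \cite{Spaeth4}, the factor sets are $\al(x_1x_2,x_1'x_2')=\mu(x_1')$ and $\al'(x_1x_2,x_1'x_2')=\mu'(x_1')$, where $\mu\in\Irr(\Guchinull/G)$ and $\mu'\in\Irr(\Nupsinull/N)$ are determined by $\w\chi_0=\mu\,\w\chi_0^{x_2}$ and $\w\psi_0=\mu'\,\w\psi_0^{x_2}$; so what must be shown is that $\mu'$ is the restriction of (an extension of) $\mu$. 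Your proposal neither specifies which "compatible" extensions to take nor derives this equality, and it is not a formality: it is the only place where the multiplicative equivariance \ref{Omega_u_epsilon_equiv} of $\Omegau$ enters.

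Concretely, the missing chain is the following. Choose $\w\chi_0$ and $\w\psi_0$ to be the Clifford correspondents of $\chi$ and of $\psi=\Omegau(\chi)$, i.e.\ $\w\chi_0^{\Gu}=\chi$ and $\w\psi_0^{\Nu}=\psi$ (this is the compatible choice your "torsor" remark gestures at). For $x_2\in(G\rtimes D)_{N,\chi_0}=(G\rtimes D)_{N,\psi_0}$ extend $\mu$ to some $\w\mu\in\Irr(\Gu/G)$; inducing $\w\chi_0=\mu\,\w\chi_0^{x_2}$ to $\Gu$ gives $\chi=\w\mu\,\chi^{x_2}$. Applying $\Omegau$ and using both its equivariance and $\Omegau(\chi\la)=\Omegau(\chi)\restr\la|{\Nu}$ yields $\psi=\restr\w\mu|{\Nu}\,\psi^{x_2}$, and uniqueness of the Clifford correspondent of $\psi$ over $\psi_0$ (\cite[6.17]{Isa}-type uniqueness) forces $\mu'=\restr\w\mu|{\Nupsinull}$; only then does $\al'=\restr\al|{A'\times A'}$ follow and Proposition \ref{propcohom} apply. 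A second, smaller omission of the same nature: Proposition \ref{propcohom} requires one and the same $\epsilon\in\Irr(\Cent_{\Gu\rtimes D}(G))=\Irr(\Z(\Gu))$ in both scalar relations $\calP(ca)=\epsilon(c)\calP(a)$ and $\calP'(ca)=\epsilon(c)\calP'(a)$; your global $\epsilon$ comes from $\w\chi_0$ and the local one from $\w\psi_0$, and their equality again needs the property $\Irr(\Z(\Gu)\mid\chi)=\Irr(\Z(\Gu)\mid\psi)$ of $\Omegau$, which should be said explicitly.
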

\begin{proof}
Let $\w\chi_0$ be the extension of $\chi_0$ to $\Guchinull$ with $\w\chi_0^\Gu=\chi$, and $\w\psi_0\in \Irr(\Nupsinull\mid \psi_0)$ with $\w\psi_0^\Gu=\psi$. They exist by \ref{hauptprop_maxext}, as $\Gu/G$ is abelian. Because of the properties of $\Omegau$ we have $\Irr(\Z(\Gu)\mid \chi)=\Irr(\Z(\Gu)\mid \psi)$. Together with $\Cent_{\Gu\rtimes D}(G)=\Z(\Gu)$ this implies
\[ \Irr(\Cent_{\Gu\rtimes D}(G)\mid \w \chi_0)= \Irr(\Z(\Gu)\mid \chi)= \Irr(\Z(\Gu)\mid \psi)= \Irr(\Cent_{\Gu\rtimes D}(G)\mid \w \psi_0)= \Lset{\epsilon}\]
for some $\epsilon\in \Irr(\Z(\Gu))$. 

Let $\calD_1$ be a linear representation of $\Guchinull$ affording $\w\chi_0$ and $\calD_2$ a linear representation of $G\rtimes D_{\chi_0}$ with $\restr\calD_1|{G}=\restr\calD_2|{G}$. By 
\[\calP(x_1x_2)=\calD_1(x_1) \calD_2(x_2) \text{ for } x_1\in \Guchinull\text{, } x_2\in (G\rtimes D)_{\chi_0}\]
one defines a projective representation of $(\Gu\rtimes D)_{\chi_0}$, which is well--defined because of $\restr\calD_1|{G}=\restr\calD_2|{G}$. Further $\restr\calP|G$ is a linear representation affording $\chi_0$ 
and satisfies
\[ \calP(cx)=\epsilon(c) \calP(x) \text{ for }c\in \Cent_{\Gu\rtimes D}(G), \,\, x \in (\Gu\rtimes D)_{\chi_0}.\]
By the proof of Lemma 5.5 in \cite{Spaeth4} the factor set $\al$ of $\calP$ satisfies 
\[\al(x_1x_2, x_1'x_2') = \mu (x_1') \text{ for } x_1, x_1'\in \Guchinull\text{, } x_2, x_2'\in (G\rtimes D)_{\chi_0}, \]
where $\mu \in \Irr(\Guchinull/G)$ satisfies  $\w \chi_0= \mu \w\chi_0^{x_2}$. (By \cite[6.17]{Isa} this defines $\mu$ uniquely.) 

Analogously in the local situation, there exists a projective representation $\calP'$ of $(\Gu\rtimes D)_{N,\psi}$ such that $\restr \calP'|N$ is a linear representation affording $\psi$, it satisfies \[ \calP(cx)=\epsilon(c) \calP(x) \text{ for }c\in \Cent_{\Gu\rtimes D}(G), \,\, x \in (\Gu\rtimes D)_{N,\psi_0}\]
and its factor set $\al'$ is determined by 
\[\al'(x_1x_2, x_1'x_2') = \mu' (x_1') \text{ for } x_1, x_1'\in \Nupsinull\text{, } x_2, x_2'\in (G\rtimes D)_{N,\psi_0}, \]
where $\mu' \in \Irr(\Nupsinull/N)$ is defined uniquely by $\w \psi_0= \mu' \w\psi_0^{x_2}$. 

For $x_2\in (G\rtimes D)_{N,\chi_0}=(G\rtimes D)_{N,\psi_0}$ the character $\mu$ has an extension $\w \mu \in \Irr(\Gu/G)$ with $\chi=\w \mu \chi^{x_2}$. Using the equivariance of $\Omegau$ we obtain $\psi= \restr\w \mu|{\Nu} \psi^{x_2}$ and $\restr \w \mu|{\Nupsinull}= \mu'$. Hence $\al'$ is the restriction of $\al$. By Proposition \ref{propcohom} this implies that $(\chi_0, \psi_0)$ satisfies \cohom.
\end{proof}
We use this result in the proof of the Theorem. 
\renewcommand{\proofname}{Proof of Theorem \ref{hauptthm2}}
\begin{proof}
The desired bijection will be explicitly constructed, but some ad hoc  choices are necessary. We begin by constructing a $(\Gu\rtimes D)_Q$--equivariant bijection $\OmegaNES:\Irrl(G)\rightarrow \Irrl(N)$.

On $\calG$ the group $(\Gu\rtimes D)_Q$ acts by conjugation and the group of linear characters $\Irr(\Gu/G) $ by multiplication. Let $\calG_T$ be a transversal in $\calG$ with respect to these combined actions. For every $\chi \in \calG_T$ we fix a character $\chi_0 \in \Irrl( G \mid \chi)$ with the properties from \ref{Star-cond-global} and let $\calG_0\subseteq \Irrl(G)$ be the set formed by them. By definition this is a $(\Gu\rtimes D)$--transversal in $\Irrl(G)$.

Because of the equivariance properties of $\Omegau$ the set $\calN_T:=\Omegau(\calG_T)$ is a transversal in $\calN$ with respect to the action of $\Irr(\Gu/G)$ and $(\Gu\rtimes D)_Q$. Like before one can associate to every $\psi\in \calN_T$ a character $\psi_0\in \Irrl(N\mid \psi)$ with the properties mentioned in \ref{Star-cond-local}. Let these character form the set $\calN_0$, which is a $(\Gu\rtimes D)_N=N(\Gu\rtimes D)_Q$--transversal in $\Irrl(N)$. 

For $\chi\in \calG_T$ and $\chi_0\in \calG_0 \cap \Irr(G\mid \chi)$ we define $\OmegaNES(\chi_0):=\psi_0$ where $\psi_0$ is the unique element in $\calN_0\cap\Irr(N\mid \Omegau(\chi))$. If $ (\Gu\rtimes D)_{N,\chi_0}=(\Gu\rtimes D)_{N,\psi_0}$ for every $\chi_0 \in \calG_0$ and $\psi_0=\OmegaNES(\chi_0)$, 
one can define a $(\Gu\rtimes D)_Q$--equivariant bijection $\OmegaNES:\Irrl(G)\rightarrow \Irrl(N)$ 
 by \[\OmegaNES(\chi_0^x):= \OmegaNES(\chi_0)^x\text{ for every } x \in (\Gu\rtimes D)_Q \text{ and } \chi_0 \in \calG_0.\]

We want to show that $(\Gu\rtimes D)_{N,\chi_0}=(\Gu\rtimes D)_{N,\psi_0}$. Because of \ref{Star-cond-global} we have
\[ (\Gu\rtimes D)_{\chi_0}= \Guchinull \rtimes D_{\chi_0}. \]
The group $D_{\chi_0}$ satisfies $D_{\chi_0}= \Set{ d\in D|{ \chi^ d= \chi \la \text{ for some } \la \in \Irr(\Gu/G)}}$.
For $\Guchinull$ we obtain the formula 
\[ \Guchinull = \bigcap_{\Set{\la \in \Irr(\Gu/G) |\chi\la =\chi}} \ker(\la), \]
since for $\la \in \Irr(\Gu/G)$ the equation $\la \chi=(\restr \la|{\Guchinull} \w \chi_0)^\Gu$ holds, where $\w \chi_0\in \Irr(\Guchinull)$ is the extension of $\chi_0$ with $\w \chi_0^{\Gu}=\chi$.

Analogously we have for $O= (\Gu\rtimes D)_{N,\psi_0}G$ the equation 
\[ O= (\Gu\cap O) \rtimes (D\cap O)\]
by \ref{Star-cond-local}. The group $(G\rtimes D)\cap O$ is determined by
\[(G\rtimes D) _{N,\psi_0}= \Set{ d\in (G\rtimes D)_N|{ \psi^ d= \psi \la \text{ for some } \la \in \Irr(\Nu/N)}}. \]
For $\Gu\cap O= G\Nupsinull$ we use the formula
\[ \Nupsinull= \bigcap_{\Set{\la \in \Irr(\Nu/N) |\psi\la =\psi}} \ker(\la), \]

According to \ref{Omega_u_epsilon_equiv} the sets $\Set{\la \in \Irr(\Nu/N) |\psi\la =\psi}$ and $\Set{\la \in \Irr(\Gu/G) |\chi\la =\chi}$ coincide under $\Gu/G\cong\Nu/N$. This implies $\Guchinull=G\Nupsinull$. As $\Omegau$ is equivariant and the equation $G\NNN_{G\rtimes D}(N)=G\rtimes D$ holds the following calculation is possible
\begin{eqnarray*}
G (G\rtimes D) _{N,\psi_0} &=& G \Set{ d\in (G\rtimes D)_N|{ \psi^ d= \psi \la \text{ for some } \la \in \Irr(\Nu/N)}} =\\
&=& G \Set{ d\in (G\rtimes D)_N|{ \chi^ d= \chi \la \text{ for some } \la \in \Irr(\Gu/G)}} =\\
&=& G \rtimes D_{\chi_0}.
\end{eqnarray*}
This proves $(\Gu\rtimes D)_{N,\chi_0}=(\Gu\rtimes D)_{N,\psi_0}$ and $\OmegaNES:\Irrl(G)\rightarrow \Irrl(N)$ can be defined as mentioned before. The map $\OmegaNES$ is a $(\Gu\rtimes D)_Q$--equivariant bijection with \[ \OmegaNES(\Irrl(G\mid \nu))= \Irrl(N\mid \nu)\text{ for every } \nu \in \Irr(\Z(G)).\]

To apply Lemma \ref{lem21}, it remains to ensure the assumptions in \ref{lem21cohom}. 
By the previous lemma the pair $(\chi, \OmegaNES(\chi))$ ($\chi\in \calG_0$) satisfies \cohom{} and by Lemma \ref{lem_cohom_conj} this implies \cohom{} for all pairs $(\chi, \OmegaNES(\chi))$ ($\chi\in \Irrl(G)$).
\end{proof}
\renewcommand{\proofname}{Proof}

\section{Results in the defining characteristic case}
In this section we apply Theorem \ref{hauptthm2} in the case where $S$ is a simple group of Lie type and $\p$ its defining characteristic. 

\begin{notation}\label{Notation_Gu}
Let $\bG$ be a simply--connected simple algebraic group defined over a field $\FF_q$ with $q$ elements of characteristic $p$ by the Frobenius endomorphism $F:\bG\rightarrow \bG$. Let $\iota: \bG\rightarrow \bGu$ be a regular embedding and $F:\bGu\rightarrow \bGu$ a Frobenius endomorphism extending $F:\bG\rightarrow \bG$, see  \cite[\S 15]{CabEng}. 
Let $F_0$ and $\gamma$ be automorphisms of $\Gu=\bGu^F$ stabilizing $G:=\bG^F$, such that $F_0$ and $\gamma$ induce a field automorphism defining an $\FF_p$--structure, and a graph automorphism of $\bG^F$, respectively, and both automorphisms are as in Theorem 2.5.1 of \cite{GLS3}. Let $D=\spann<F_0,\gamma>$, if $\bG^F\neq \tD_{4,\SC}(q)$. In the case of $\bG^F=\tD_{4,\SC}(q)$ let $\gamma_i$ for $i=2,3$ be a graph automorphisms of $\bG^F$ inducing an automorphism of order $i$ as in \cite[2.5.1]{GLS3} and $D=\spann<F_0,\gamma_2, \gamma_3>$. 
Assume that $F_0, \gamma$ and $\gamma_i$ induce on $\Gu$ and $G$ automorphisms of the same order. Let $\bB$ be a $D$--stable Borel of $\bG$ and $\bU$ its unipotent radical.
\end{notation}
For simplification we exclude some cases such that the group $[P,P]$ of the Sylow $\p$--subgroup $P$ of $G$ has a uniform structure and the set $\Irrl(G)$ coincides with the set of semisimple characters.
\begin{assumption}\label{ass_Gu}
The group $G=\bG^F$ satisfies 
\[G\notin\Lset{\tB_n(2), \tC_n(2), \tG_2(2), \tB_2(2^i), \tG_2(3^i), \tF _4(2^i) }.\] 
\end{assumption}
In the following we construct a $D$--invariant Gel\cprime fand--Graev character of $G=\bG^F$, which exists by Lemma 3.1 of \cite{BrunatHimstedt} in some cases. We use the notation introduced in \cite{Maslowski} for linear characters of $\bU^F$.
\begin{defi}\label{phiS_def}
Let $(\FF_{q^l},+)$($l\in \Lset{1,2,3}$) be the additive group in $\FF_{q^l}$, and $\phi_l:(\FF_{q^l},+) \rightarrow \CC$ a non--trivial linear character, which is invariant under all field automorphisms of $\FF_{q^i}$. By linear algebra such a character exists.
Let $R^+$ and $R_F$ be the set of positive, respectively simple roots associated to the Borel $\bB$. The Frobenius endomorphism $F$ acts by permutation on $R_F$. The elements of $\bU$ are of the form $\prod_{ \al \in R^+}x_\al(t_\al)$ for some $t_\al$ in the algebraic closure of $\FF_q$ as in Theorem 1.12.1 of \cite{GLS3}.

It is well--known, that for an $F$--transversal $I=\Lset{i_1,\ldots, i_{|I|}}$ in $R_F$ the map defined by 
\[ \bU^F \ni \prod_{\al\in R^+}x_\al(t_\al) \mapsto (t_{i_1},\ldots, t_{i_{|I|}}) \in (\FF_{q^l},+)^{|I|} \text{ for }\prod_{\al\in R^+}x_\al(t_\al) \in \bU^F \]
is a group morphism with kernel $[\bU^F,\bU^F]$, where $l$ is the maximal length of an $F$--orbit in $R_F$. 
For any subset $S\subseteq I\subseteq R_F$ there exists a unique linear character $\phi_S\in \Irr(\bU^F)$ with 
$\phi_S(\prod_{\al\in R^+}x_\al(t_\al))= \prod_{\al\in S}\phi_l(x_\al(t_\al))$.
The character $\phi_I$ is regular, see 14.B in \cite{Cedric}, and $D$--invariant. Hence the associated Gel\cprime fand Graev character $\Gamma= (\phi_{R_F})^G $ of $G=\bG^F$ is also $D$--invariant.
\end{defi}
\renewcommand{\labelenumi}{(\alph{enumi})}\renewcommand{\theenumi}{\thesubsection{ }(\roman{enumi})}

\begin{rem}\label{rem_semisimple_char}
\begin{enumerate}
\item The groups $\Gu=\bGu^F$ and $D$ satisfy the group theoretic assumptions of Theorem \ref{hauptthm2}.
\item Maximal extensibility holds for $G\lhd \Gu$.
\item For every $\chi\in \calG$ there exists some $\chi_0\in \Irr(\Gu\mid \chi)$ with $(\Gu\rtimes D)_{\chi_0}= \Guchinull\rtimes D_{\chi_0}$ and $\chi_0$ extends to $G\rtimes D_{\chi_0}$.
\end{enumerate}
\end{rem}
\begin{proof}
Part (a) is an immediate consequence of Theorem 2.5.1 of \cite{GLS3} as we have excluded the groups that have an Suzuki or Ree  automorphism. Part (b) is Theorem 15.11 of \cite{CabEng}. By the proof of Lemma 5 in \cite{Brunat} the set $\Irrl(G)$ coincides with the set of semisimple characters. The set $\calG$ from Theorem \ref{hauptthm2} is the set of semisimple characters of $\Gu$ and because of $\p\nmid|\Gu/G|$ it coincides with $\Irrl(\Gu)$. By a result of Asai \cite[15.11]{Cedric}, there exists exactly one $\chi_0\in \Irr(G\mid \chi)$ with multiplicity $\pm 1$ in $\D_G(\Gamma)$, where $\D_G$ is the Alvis Curtis duality. By definition $\D_\bG$ commutes with the automorphisms of $D$, hence $\D_G(\Gamma)$ is also $D$--stable. Together with the fact that every $\Gu$--orbit in $\Irrl(G)$ contains exactly one constituent of $\D_\bG(\Gamma)$ this implies \[ (\Gu\rtimes D)_{\chi_0}=\Guchinull\rtimes D_{\chi_0}.\]

We extend $\chi_0$ to  $G\rtimes D_{\chi_0}$ using the methods described in Section 3.2 of \cite{BrunatE6}. By (11.31) of \cite{Isa} it suffices to check that $\chi_0$ extends to $G\rtimes D_{\chi_0,i}$ for $i\in \Lset{2,3}$, where $D_{\chi_0,i}$ is a (non-cyclic) Sylow $i$--subgroup $D_{\chi_0,i}$ of $D_{\chi_0}$. The only non--cyclic Sylow $i$--subgroups of $D$ are of the form $\Cy_i \times \spann<F_0>$, where $\Cy_i$ is generated by a graph automorphism of order $i$. Hence if $D_{\chi_0,i}$ is non--cyclic a graph--automorphism $\si$ of order $i$ lies in $D_{\chi_0,i}$. According to (34) of \cite{BrunatE6} there exists a $D_{\chi_0}$--stable extension $\wh \Gamma$ of $\Gamma$ to $G\rtimes \spann<\si>$. As $\Gamma$ is multiplicity free, the extension $\wh \Gamma$ contains a unique extension $\zeta$ of $\D_\bG(\chi_0)$, which is hence also $D_{\chi_0}$--stable. As $\si$ is a quasi--central automorphism of $\bG$ we can apply Proposition 3.13 of \cite{grnc} and there exists an extension $\wh \chi_0$ of $\chi_0$ to $G\rtimes \spann<\si>$ which can be obtained from $\zeta$ using the isometric involution $\D_{\bG.\si^j}$ from Definition 3.10 of \cite{grnc}. The map $\D_{\bG.\si^j}$ is defined using Harish--Chandra induction and restriction, hence it commutes with the action of $D$, the character $\wh \chi_0$ is $D_{\chi_0,i}$--invariant and extends to $G\rtimes D_{\chi_0,i}$, as the quotient $D_{\chi_0,i}/\spann<\si>$ is cyclic. 
\end{proof}

 Now we consider the local situation: by 2.29 (i) of \cite{CabEng} the group $B:=\bB^F$ is the normalizer of a Sylow $\p$--subgroup of $G$ and serves as $N$, when proving Theorem \ref{hauptthm}. In \cite{Maslowski} the author chooses $\bGu$, which is called $\bG_u$ there, and $F$, as well as $F_0$ and $\gamma$ on $\bGu$ in a certain way, see Sections 2 and 3 of \cite{Maslowski}. For all his results we assume $\bGu$, $F$, $F_0$ and $\gamma$ to be defined like there. Likely the results generalize to arbitrary choices of $\bGu$ and $F$. Further in \cite{Maslowski} the author excludes $G\in \Lset{\tD_n(2), \tw 2 \tD_n(2)}$ but the used assumptions on $\bU^F/[\bU^F,\bU^F]$ and $\p'$--characters are also satisfied in this case. Hence his proof applies there, as well. 

\begin{theorem}[Maslowski]\label{thm_Maslowski}
Let $\bGu$ and $F$ be defined as in \cite{Maslowski}.
\begin{enumerate}
\item Maximal extensibility holds with respect to $B\lhd \Bu$.
\item There exists a $D$--equivariant bijection $\Omegau: \Irrl(\Gu)\rightarrow \Irrl(\Bu)$ for $\Bu:=\NNN_\Gu(B)$ with $\Omegau(\chi\la)=\Omegau(\chi)\restr\la|{\Bu}$ for $\la\in \Irr(\Gu/G)$, and 
$\Irr( \Z(\Gu)\mid \chi)= \Irr( \Z(\Gu)\mid \Omegau(\chi))$. 
\end{enumerate}
\end{theorem}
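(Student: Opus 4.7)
The plan is to prove both parts through an explicit parametrization of $\Irrl(\Bu)$ that matches the parametrization of semisimple characters of $\Gu$. Since $p \nmid |\Gu/G|$ and $\Irrl(G)$ equals the set of semisimple characters of $G$ by Remark \ref{rem_semisimple_char}, also $\Irrl(\Gu)$ coincides with the set of semisimple characters of $\Gu$, parametrized via Deligne--Lusztig theory by geometric conjugacy classes of semisimple elements $[s]$ in the dual group $(\bGu)^{*F^*}$. The core of the argument is to exhibit the same parameter set on the local side.

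First I would describe $\Irrl(B)$ concretely. Write $B=U\rtimes T$, where $U=\bU^F$ and $T$ is the fixed-point group of an $F$-stable maximal torus of $\bB$. Since $U$ is a $p$-group, an irreducible character of $B$ has $p'$-degree if and only if it lies above a linear character $\phi\in \Irr(U/[U,U])$. By Definition \ref{phiS_def}, $U/[U,U]$ is isomorphic to a direct sum of additive groups indexed by an $F$-transversal $I\subseteq R_F$, and $T$ acts on $\Irr(U/[U,U])$ via the simple root characters. By Clifford theory $\Irrl(B)$ is then parametrized by pairs $(\phi,\eta)$ with $\phi$ a $T$-orbit representative in $\Irr(U/[U,U])$ and $\eta\in \Irr(T_\phi)$, producing $\chi_{(\phi,\eta)}=(\tilde\phi\cdot\eta)^B$ for a canonical extension $\tilde\phi$ of $\phi$ to $UT_\phi$.

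Part (a) then follows quickly: $\Bu/B$ embeds into the abelian group $\Gu/G$, so extending $\chi_{(\phi,\eta)}$ to its inertia group in $\Bu$ reduces by Clifford theory to extending $\eta$ from $T_\phi$ to the corresponding inertia subgroup of the abelian group $T_u:=\Bu\cap \NNN_\Gu(T)$, and such extensions always exist. For part (b) the bijection $\Omegau$ would send the semisimple character $\chi_s$ attached to $[s]\in (\bGu)^{*F^*}/{\sim}$ to the unique extension in $\Irr(\Bu)$ of the local character $\chi_{(\phi_{R_F},\restr{\hat s}|{T_{\phi_{R_F}}})}$, where $\hat s\in \Irr(T_u)$ is the character corresponding to $s$ under the duality isomorphism between $T_u$ and the reference dual maximal torus. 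The regular character $\phi_{R_F}$ is both $T$-stable and $D$-stable by construction, so the inertia computations reduce entirely to the torus. The duality $s\leftrightarrow \hat s$ translates tensoring by $\la\in \Irr(\Gu/G)$ into multiplication by the corresponding central twist on the dual torus, giving the compatibility $\Omegau(\chi\la)=\Omegau(\chi)\restr\la|{\Bu}$, and the central-character condition reduces to the compatibility of $\restr{\hat s}|{\Z(\Gu)}$ on both ends.

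The main obstacle is verifying $D$-equivariance in the twisted and exceptional cases. The actions of $F_0$, $\gamma$, and, for $\tD_4$, of $\gamma_2$ and $\gamma_3$ on $\hat s$ can be read off from their induced action on the dual torus, but matching these with the action on the local pair $(\phi_{R_F},\restr{\hat s}|{T_{\phi_{R_F}}})$ requires tracking how these automorphisms permute the simple root characters $\phi_l\circ x_\al$ in accordance with the root datum duality. This is particularly delicate for triality in $\tD_4$ and for the order-two graph automorphisms on the remaining classical types, and is precisely the point where the specific choices of $\bGu$, $F$, $F_0$, $\gamma$ made in \cite{Maslowski} become essential.
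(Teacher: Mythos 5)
Your construction of the bijection does not work as stated, and the flaw is quantitative. You propose to send every semisimple character $\chi_s$ of $\Gu$ to a character of $\Bu$ lying above the single regular linear character $\phi_{R_F}$ of $\bU^F$, built from the restriction of $\hat s$ to the stabilizer of $\phi_{R_F}$. But the stabilizer of a regular linear character of $\bU^F$ in the maximally split torus $\w T\leq \Bu$ is only $\Z(\Gu)$ (respectively $\Z(G)$ inside $T$), so the characters of $\Bu$ above $\phi_{R_F}$ are parametrized by $\Irr(\Z(\Gu))$ and there are far too few of them; moreover the passage $\hat s\mapsto$ (restriction to that small stabilizer) destroys most of the information in $s$, so the map is not injective either. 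A count in the smallest case makes this concrete: for $\Gu=\GL_2(q)$ one has $|\Irrl(\Gu)|=q(q-1)$, while only $q-1$ irreducible characters of the Borel $\Bu$ lie above a regular character of $U$; the remaining $(q-1)^2$ elements of $\Irrl(\Bu)$ lie above the trivial character of $U$. In general $\Irrl(\Bu)$ is stratified by the $\w T$--orbits of linear characters of $\bU^F$, i.e.\ by \emph{all} subsets $S$ of the set of $F$--orbits of simple roots, and the essential content of Maslowski's theorem is to match the Jordan-type parametrization of semisimple characters of $\Gu$ with this family of local labels $(S,\eta)$, $\eta\in\Irr(\w T_{\phi_S})$ --- not just with the stratum $S=R_F$. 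Your sketch never reaches the non-regular strata, so it cannot produce a bijection.

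In addition, the properties that the paper actually needs from this statement --- $D$--equivariance (including triality for $\tD_4$ and the twisted types), the compatibility $\Omegau(\chi\la)=\Omegau(\chi)\la$ for $\la\in\Irr(\Gu/G)$, and the matching of central characters --- are exactly the points you defer as ``the main obstacle'' without argument; they constitute the hard part of the result. The paper does not reprove any of this: the statement is attributed to Maslowski and its proof is a citation (part (a) to the proof of his Proposition 11.3, the bijection and its equivariance to his Theorems 15.1 and 15.3, the central-character condition to his Proposition 15.2). So even setting aside the citation-versus-construction difference, your argument has a genuine gap both in the definition of the map and in the verification of the equivariance properties.
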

\begin{proof}
Part (a) is contained in the proof of Proposition 11.3 of \cite{Maslowski}. The $D$--equivariant bijection exists by \cite[Theorem 15.1]{Maslowski}. Theorem 15.3 of \cite{Maslowski} and some straight--forward  calculations prove $\Omegau(\chi\la)=\Omegau(\chi)\restr\la|{\Bu}$ for $\la\in \Irr(\Gu/G)$. The equation 
$\Irr( \Z(\Gu)\mid \chi)= \Irr( \Z(\Gu)\mid \Omegau(\chi))$ is Proposition 15.2 of \cite{Maslowski}. 	
\end{proof}
It remains to check the assumption in Proposition \ref{Star-cond-local}. 
\begin{rem} \label{rem_Star_cond_local}
Let $\bGu$ and $F$ be defined as in \cite{Maslowski}.
\begin{enumerate}
\item {\rm (Maslowski)} For every $\psi\in \Irrl(\Bu)$ there exists some $\psi_0\in \Irrl(B\mid \psi)$ such that $(\Bu\rtimes D)_{\psi_0}= \Bu_{\psi_0}\rtimes D_{\psi_0}$. 
\item The character $\psi_0$ extends to $(B\rtimes D)_{\psi_0}$.
\end{enumerate}
\end{rem}
\begin{proof}
Part (a) is Proposition 11.13 (3) of \cite{Maslowski}. The statement (b) is trivial if $D_{\psi_0}$ is cyclic. Hence we may assume that $F$ is not a standard Frobenius endomorphism. By Proposition 11.13 (3) of \cite{Maslowski} the character $\psi_0$ from (a) can be chosen to lie above a $F_0$--invariant character $\phi_S\in \Irr(\bU^F)$ for some set $S\subseteq R_F$, where $\phi_S$ is defined as in Definition \ref{phiS_def}.

Let $F'\in \spann<F_0>$ such that $\spann<F'>=\spann<F_0>_{\psi_0}$. Then the extension $\w \phi_S$ of $\phi_S$ to $\I_B(\phi_S)$ with $(\w \phi_S)^B=\psi_0$ must satisfy $(\w \phi_S)^{F'}=\w \phi_S$. Let $\widehat \phi_S$ be an extension of $\w \phi_S$ to $\I_B(\phi_S)\rtimes \spann<F'>$. 
For a graph automorphism $\gamma$ we have $(\phi_S)^\gamma=\phi_{\gamma(S)}$ and $\Irrl(B\mid \phi_S)\cap \Irrl(B\mid \phi_{S'})=\emptyset$ for $S\ne S'$ by Proposition 8.4 of \cite{Maslowski}. Hence $(\phi_S)^\si=\phi_{S}$ and $(\w \phi_S)^\si=\w \phi_S$ hold for any $\si\in D_{\psi_0}$. As $\widehat \phi_S$ is a linear character, the equation $[F',\si]=1$ implies that the characters $\widehat \phi_S$ and $(\wh \phi_S)^{B\rtimes \spann<F'>}$ are $\si$--invariant. This proves that $\psi_0$ has an extension to $B\rtimes D_{\psi_0}$.
\end{proof}
\renewcommand{\proofname}{Proof of Theorem \ref{hauptthm}}

Now we have checked all assumptions of Theorem \ref{hauptthm2} and may conclude with the proof of our main theorem.
\begin{proof}
Every simple group of Lie type is the quotient of a Suzuki or Ree group or is isomorphic to $G/\Z(G)$ for some $G$ of Notation \ref{Notation_Gu}. 
Let us first consider the cases, excluded in Assumption \ref{ass_Gu}.
For $G=\tB_n(2) \cong \tC_n(2)$ ($n>2$), $G\in \Lset{\tB_2(2^i),\tF_4(2^i),\tG_2(3^i) }$ ($i>1$) and for Suzuki and Ree groups the statement is known by \cite{Cabanes} and \cite{Brunat}, respectively. One can find references or proofs for the remaining exceptions in  \cite[Remark 2]{Brunat}. By Theorem 1.1 of \cite{ManonLie} every simple group $S$ of Lie type with an exceptional Schur multiplier satisfies the inductive McKay condition. Hence we may assume that $G$ is the maximal perfect central extension of $G/\Z(G)$.

 For some $\Gu$ and $D$ as in Notation \ref{Notation_Gu} and Section 3 of \cite{Maslowski} we apply Theorem \ref{hauptthm2}. By Remark \ref{rem_semisimple_char} the global part of \ref{hauptprop_maxext} and \ref{Star-cond-global} hold. Further by Theorem \ref{thm_Maslowski} there exists a bijection with the required properties. The characters in $\Irrl(\Bu)$ have, by Remark \ref{rem_Star_cond_local}, the property described in \ref{Star-cond-local}. Hence by Theorem \ref{hauptthm2} the inductive McKay condition holds for $G/\Z(G)$ and $\p$. 
\end{proof}

\bibliographystyle{alpha}
\bibliography{literatur09}
\end{document}